\documentclass[a4paper, 11pt]{article}
\newcommand{\thetitle}{Honest adaptive confidence bands and self-similar 
functions}

\newcommand{\forenames}{Adam D.}
\newcommand{\surname}{Bull}
\newcommand{\fullname}{\forenames\ \surname}

\newcommand{\mscone}{62G15}
\newcommand{\msctwo}{62G07}
\newcommand{\mscthree}{62G08}
\newcommand{\mscfour}{62G20}
\newcommand{\themsclass}{\mscone\ (Primary); \msctwo, \mscthree, \mscfour\ 
(Secondary)}

\newcommand{\kwdone}{nonparametric statistics}
\newcommand{\kwdtwo}{adaptation}
\newcommand{\kwdthree}{confidence sets}
\newcommand{\kwdfour}{supremum norm}
\newcommand{\kwdfive}{self-similar functions}
\newcommand{\thekeywords}{\kwdone, \kwdtwo, \kwdthree, \kwdfour, \kwdfive}

\newcommand{\addressone}{Statistical Laboratory}
\newcommand{\addresstwo}{University of Cambridge}

\newcommand{\theemail}{a.bull@statslab.cam.ac.uk}

\newcommand{\theabstract}{
Confidence bands are confidence sets for an unknown function \(f,\) containing 
all functions within some sup-norm distance of an estimator. In the density 
estimation, regression, and white noise models, we consider the problem of 
constructing adaptive confidence bands, whose width contracts at an optimal 
rate over a range of H\"{o}lder classes.  

While adaptive estimators exist, in general adaptive confidence bands do not, 
and to proceed we must place further conditions on \(f.\) We discuss previous 
approaches to this issue, and show it is necessary to restrict \(f\) to 
fundamentally smaller classes of functions.

We then consider the self-similar functions, whose H\"{o}lder norm is similar 
at large and small scales. We show that such functions may be considered 
typical functions of a given H\"{o}lder class, and that the assumption of 
self-similarity is both necessary and sufficient for the construction of 
adaptive bands.  Finally, we show that this assumption allows us to resolve 
the problem of undersmoothing, creating bands which are honest simultaneously 
for functions of any H\"{o}lder norm.
}

\usepackage[round]{natbib}
\usepackage{amsmath, amsfonts, amssymb, amsthm, mathtools, thmtools, booktabs, 
tikz, mparhack}
\usepackage[bookmarks, breaklinks, colorlinks, linkcolor=blue, citecolor=blue, 
pdfauthor={\fullname}, pdftitle={\thetitle}]{hyperref}

\newcommand{\comment}[1]{}
\let\oldmarginpar\marginpar
\renewcommand\marginpar[1]{\-\oldmarginpar[\raggedleft\footnotesize 
#1]{\raggedright\footnotesize #1}}

\DeclarePairedDelimiter{\abs}{\lvert}{\rvert}
\DeclarePairedDelimiter{\norm}{\lVert}{\rVert}
\newcommand{\N}{\mathbb{N}}
\newcommand{\Z}{\mathbb{Z}}

\newcommand{\R}{\mathbb{R}}
\renewcommand{\P}{\mathbb{P}}
\newcommand{\E}{\mathbb{E}}
\newcommand{\Var}{\mathbb{V}\mathrm{ar}}

\newcommand{\iid}{\overset{\mathrm{i.i.d.}}{\sim}}

\numberwithin{equation}{section}
\declaretheorem[numberwithin=section]{theorem}
\declaretheorem[sibling=theorem]{lemma}

\declaretheorem[sibling=theorem]{assumption}
\declaretheorem[sibling=theorem]{proposition}

\newcommand{\jmin}{j_n^{\min}}
\newcommand{\jmax}{j_n^{\max}}
\newcommand{\ja}{j_n^{\text ad}}
\newcommand{\jc}{j_n^{\text cl}}
\newcommand{\je}{j_n^{\text ex}}
\newcommand{\lu}{\overline \lambda}
\renewcommand{\ll}{\underline \lambda}
\newcommand{\jl}{\underline{j}_n^{\text ad}}
\newcommand{\ju}{\overline{j}_n^{\text ad}}
\newcommand{\Jc}{J_n^{\text cl}}
\newcommand{\Je}{J_n^{\text ex}}
\newcommand{\jh}{\hat j_n}
\newcommand{\jha}{\jh^{\text ad}}

\newcommand{\jhc}{\jh^{\text cl}}
\newcommand{\jhe}{\jh^{\text ex}}
\newcommand{\nln}{c_{n, 1}}
\newcommand{\bjk}[1][]{\beta_{j_{#1},k_{#1}}}
\newcommand{\bjkh}[1][]{\hat \beta_{j_{#1},k_{#1}}}
\newcommand{\blkh}[1][]{\hat \beta_{l_{#1},k_{#1}}}
\newcommand{\Mh}{\hat M_n}
\newcommand{\sh}{\hat s_n}
\newcommand{\sph}{(s + 1/2)}
\newcommand{\shph}{(\sh + 1/2)}
\newcommand{\smin}{s_{\min}}
\newcommand{\smax}{s_{\max}}

\newcommand{\fh}{\hat f}
\newcommand{\fb}{\bar f}
\newcommand{\sfb}{\overline \sigma_\varphi}
\newcommand{\Ra}{R_n^{\text ad}}
\newcommand{\Ca}{C_n^{\text ad}}
\renewcommand{\Re}{R_n^{\text ex}}
\newcommand{\Ce}{C_n^{\text ex}}

\begin{document}

\title{\thetitle
\footnotetext{\emph{Mathematics subject classification 2010.} \themsclass}
\footnotetext{\emph{Keywords.} \thekeywords}}
\author{\fullname\\\footnotesize \addressone \\\footnotesize \addresstwo \\
\footnotesize \theemail}
\date{}
\maketitle

\begin{abstract}
  \theabstract
\end{abstract}

\section{Introduction}
\label{sec:introduction}

Suppose we have an unknown function \(f : [0, 1] \to \R\) we wish to estimate.  
Our data may come from:
\begin{enumerate}
  \item density estimation, where \(f\) is a density on \([0, 1],\) and we 
    observe
    \[X_1, \dots, X_n \iid f;\]
  \item fixed design regression, where we observe
    \[Y_i \coloneqq f(x_i) + \varepsilon_i, \qquad \varepsilon_i \iid N(0, 
    \sigma^2),\]
    for \(x_i \coloneqq i/n,\) \(i=1, \dots, n;\) or
  \item white noise, where we observe the process
    \[Y_t \coloneqq \int_0^t f(s)\, ds + n^{-1/2} B_t,\]
    for a standard Brownian motion \(B.\)
\end{enumerate}

The performance of an estimator \(\fh_n\) depends on the smoothness of the 
function \(f.\) In the following, we will measure performance by the 
\(L^\infty\) loss, \(\norm{\fh_n - f}_\infty,\) where \(\norm{f}_\infty 
\coloneqq \sup_{x \in [0, 1]} \abs{f(x)}.\) \(L^\infty\) loss is the hardest 
of the \(L^p\) loss functions to estimate under, but provides intuitive risk 
bounds, simultaneously describing local and global performance.  If the 
function \(f\) is known to lie in the smoothness class \(C^s(M)\) of functions 
with \(s\)-H\"{o}lder norm at most \(M,\)
\begin{multline*}
  C^s(M) \coloneqq \bigg\{f \in C([0, 1]) : f \text{ has } k \coloneqq \lceil 
  s \rceil - 1 \text{ derivatives, }\\
   \norm{f}_\infty, \dots, \norm{f^{(k)}}_\infty \le M, \sup_{x, y \in [0, 1]} 
   \frac{\abs{f^{(k)}(x)-f^{(k)}(y)}}{\abs{x-y}^{s-k}} \le M\bigg\},
\end{multline*}
then the \(L^\infty\) minimax rate of estimation,
\[\inf_{\fh_n} \sup_{f \in C^s(M)} \E_f \norm{\fh_n-f}_\infty,\]
decays like \((n/\log n)^{-s/(2s+1)}\) 
\citep[see][]{tsybakov_introduction_2009}.

The simplest estimators attaining this rate depend on the quantities \(s\) and 
\(M,\) which in practise we will not know in advance. However, it is possible 
to estimate \(f\) {\em adaptively}: to choose an estimator \(\fh_n,\) not 
depending on \(s\) or \(M,\) which nevertheless obtains the minimax rate over 
a range of classes \(C^s(M),\)
\[\sup_{f \in C^s(M)} \E_f \norm{\fh_n - f}_\infty = O\left((n/\log 
n)^{-s/(2s+1)}\right).\]
Techniques for constructing such estimators include Lepskii's method 
\citep{lepskii_problem_1990}\comment{lepskii_optimal_1997}, wavelet 
thresholding \citep{donoho_wavelet_1995}, and model selection 
\citep{barron_risk_1999}.

Of course, to make full use of an adaptive estimator \(\fh_n,\) we must also 
quantify the uncertainty in our estimate. We would like to have a risk bound 
\(R_n,\) depending only on the data, which satisfies \(\norm{f-\fh_n}_\infty 
\le R_n\) with high probability. Equivalently, we would like a {\em confidence 
band},
\begin{equation}
  \label{eq:band-defn}
  C_n \coloneqq \{f \in C([0, 1]) : \norm{f-\fh_n}_\infty \le R_n\},
\end{equation}
containing \(f\) with high probability. To benefit from the adaptive nature of 
\(\fh_n,\) we would also like the radius \(R_n\) to be adaptive, decaying at a 
rate \((n/\log n)^{-s/(2s+1)}\) over any class \(C^s(M).\)

Unfortunately, this is impossible in general 
\citep{low_nonparametric_1997,cai_adaptation_2004}.
The size of an adaptive confidence band must depend on the parameters \(s\) 
and \(M,\) which we cannot estimate from the data: the function \(f\) may be 
{\em deceptive}, superficially appearing to belong to one smoothness class 
\(C^s(M),\) while instead belonging to a different, rougher class. If we wish 
to proceed, we must place further conditions on \(f.\)

Different conditions have been considered by \citet{picard_adaptive_2000}, 
\citet{genovese_adaptive_2008}, \citet{gine_confidence_2010}, and 
\citet{hoffmann_adaptive_2011}. Of note, \citeauthor{gine_confidence_2010} 
place a self-similarity condition on \(f,\) requiring its regularity to be 
similar at large and small scales; they then obtain confidence bands which 
contract adaptively over classes \(C^s(M),\) where \(M > 0\) is fixed. 
\citeauthor{hoffmann_adaptive_2011} consider a weaker separation condition, 
which allows adaptation to finitely many classes \(C^{s_1}(M), \dots, 
C^{s_k}(M).\)

The conditions in these two papers are qualitatively different. In 
\citet{hoffmann_adaptive_2011}, the family of functions \(f\) under 
consideration at time \(n\) asymptotically contains the full model,
\begin{equation}
\label{eq:finite-model}
\mathcal F \coloneqq \bigcup_{i=1}^k C^{s_i}(M),\qquad 0 < s_1 < \dots < 
s_k,\,M>0.
\end{equation}
The confidence bands constructed are thus eventually valid for all functions 
\(f \in \mathcal F,\) although the time \(n\) after which a band is valid 
depends on the unknown \(f.\)
The penalty for this generality comes in the nature of the adaptive result: 
the bands contract at rates \(n^{-s_i/(2s_i+1)}\) for any \(f \in 
C^{s_i}(M),\) but they do not attain the minimax rate \(n^{-s/(2s+1)}\) for 
\(f \in C^s(M),\) \(s \not \in \{ s_1, \dots, s_k\}.\)

Conversely, in \citet{gine_confidence_2010}, the bands attain the rate 
\(n^{-s/(2s+1)}\) for any \(f \in C^s(M),\) \(s \in [\smin, \smax].\) However, 
the family of functions considered does not, even in the limit, contain the 
full model,
\begin{equation}
\label{eq:continuum-model}
\mathcal F \coloneqq \bigcup_{s=\smin}^{\smax} C^s(M), \qquad 0 < \smin < 
\smax,\,M>0.
\end{equation}
Instead, some functions \(f\) must be permanently excluded from consideration.

We can describe this difference in terms of dishonest confidence sets.  We say 
a confidence set \(C_n\) for \(f\) is {\em honest}, at level \(1 - \gamma,\) 
if it satisfies
\begin{equation}
\label{eq:honest}
\limsup_n \sup_{f \in \mathcal F} \P_f(f \not\in C_n) \le \gamma,
\end{equation}
where \(\mathcal F\) is the entire family of functions \(f\) we wish to adapt 
to \citep[see][\comment{\S1, }and references therein]{robins_adaptive_2006}. 
Honesty is necessary to produce practical confidence sets; it ensures that 
there is a known time \(n,\) not depending on \(f,\) after which the level of 
the confidence set is not much smaller than \(1 - \gamma.\)  In contrast, a 
{\em dishonest} set satisfies the weaker condition
\[\sup_{f \in \mathcal F} \limsup_n \P_f(f \not \in C_n) \le \gamma.\]
While dishonest confidence sets are not useful for inference, they can provide 
a useful benchmark of nonparametric procedures. The bands in 
\citet{hoffmann_adaptive_2011} are dishonest confidence sets for the full 
model \eqref{eq:finite-model}; those in \citet{gine_confidence_2010} are not, 
for the model \eqref{eq:continuum-model}.

In the following, we will show that this distinction is intrinsic: that the 
problem of adapting to finitely many \(s_i\) is fundamentally different from 
adapting to continuous \(s.\) We will construct confidence bands which are 
adaptive in the model \eqref{eq:continuum-model}, under a weaker 
self-similarity condition than in \citet{gine_confidence_2010}; functions 
satisfying this condition may be considered typical members of any class 
\(C^s(M).\) We will then show that our condition is as weak as possible for 
adaptation over \eqref{eq:continuum-model}, and that no adaptive confidence 
band can be valid, even dishonestly, for all of \eqref{eq:continuum-model}.

We also provide further improvements on past results. Firstly, past 
constructions of adaptive confidence sets under self-similarity have required 
{\em sample splitting}: splitting the data into two groups, one for estimating 
the function \(f,\) and the other for estimating its smoothness. In the 
construction of our bands, we will show that this procedure can be avoided, 
leading to smaller constants in the rate of contraction.

More importantly, in past results \(M\) is assumed known; in general, this 
assumption is required to obtain meaningful results. However, in practise, we 
will not know \(M\) in advance; we would much prefer to adapt also to the 
unknown H\"{o}lder norm.  We would thus like a confidence band which is valid 
even for the model
\begin{equation*}
\mathcal F \coloneqq \bigcup_{M=0}^\infty\bigcup_{s = \smin}^{\smax} C^s(M), 
\qquad 0 < \smin < \smax.
\end{equation*}

In \citet{gine_confidence_2010}, the authors suggest the standard remedy of 
undersmoothing: constructing bands valid for subsets of \(C^s(M_n),\) with 
\(M_n \to \infty\) as \(n \to \infty.\) However, doing so not only incurs a 
rate penalty; it also gives a dishonest band. We will instead show that, under 
the assumption of self-similarity necessary for adaptation, we can perform 
honest inference without an a priori bound on \(M.\)

We would therefore like to construct a confidence band for \(f \in C^s(M),\) 
which:
\begin{enumerate}
\item is adaptive;
\item makes assumptions on \(f\) as weak as possible; and
\item is honest simultaneously for a range of \(s,\) and all \(M > 0.\)
\end{enumerate}
Confidence sets \(C_n\) in the literature are often constructed to be {\em 
asymptotically exact}, satisfying
\[\sup_{f \in \mathcal F} \, \abs{\P_f(f \not \in C_n) - \gamma} \to 0\]
as \(n \to \infty.\) We will show that, using an undersmoothed estimator, we 
can construct an exact confidence band, satisfying conditions (ii) and (iii), 
which is rate-adaptive up to a logarithmic factor.

We will argue, however, that in this case exactness may be undesirable. 
Instead, we will construct an {\em inexact} confidence band, satisfying only 
\eqref{eq:honest};
while we no longer know the exact level of our confidence band, this level is 
guaranteed to be at least \(1 - \gamma\). Our inexact band is centred at an 
adaptive Lepskii-type estimator, is asymptotically smaller, more likely to 
contain the function \(f,\) and satisfies all three conditions (i)--(iii).

As our bands cannot rely on a known (or unknown) bound on the H\"{o}lder norm 
\(M,\) their construction differs significantly from those given previously in 
the literature. We likewise describe new approaches to undersmoothing, and to 
linking the white noise model with density estimation and regression. In each 
case, rather than assuming \(M\) is bounded, we must make fundamental use of 
the self-similarity property of our functions \(f.\)

Our bands thus depend on self-similarity parameters \(\varepsilon\) and 
\(\rho,\) which determine the functions \(f\) to be excluded. In this sense, 
they are no different than any other technique, whether fixing a class 
\(C^s(M)\) in advance, or using one of the methods discussed previously. (The 
bands in \citealp{gine_confidence_2010}, do not require a choice of parameters 
to construct, but they are honest only over families \(\mathcal F\) which do; 
using them in practise would thus involve an implicit choice of parameters.) 
The advantage in our bands is that, while we must still exclude some functions 
\(f,\) we do so only where necessary for adaptation.

The parameters \(\varepsilon\) and \(\rho\) may in practise be set by 
domain-specific knowledge, or by convention, as is common with the confidence 
level \(1 - \gamma = 95\%.\) Whether this is suitable for practical inference 
is a matter for further study. We leave the reader, however, with the words of 
Box: ``all models are wrong, but some are useful.'' 

In \autoref{sec:self-similarity}, we describe our self-similarity condition, 
and in \autoref{sec:adaptation}, we state our main results. We provide proofs 
in \hyperref[app:self-similarity]{Appendices 
\ref{app:self-similarity}}--\ref{app:negative}.

\comment{
\marginpar{Useful notations? \(\sim, O, o, \lesssim, x^+?\)}

\begin{table}
\centering
\begin{tabular}{cl}
\toprule
\multicolumn{2}{l}{\autoref{sec:introduction}}\\
\midrule
\(f\) & unknown function to estimate \\
\(X_i,\, Y_i,\, Y\) & observations of \(f\) in the three models \\
\(\fh_n\) & estimator of \(f\) \\
\(\norm{\,\cdot\,}_\infty\) & supremum norm \\
\(C_n,\, R_n\) & confidence band for \(f,\) and its radius \\
\(\mathcal F\) & family of functions \(f\) to estimate \\
\(1 - \gamma\) & level of confidence set \\
\midrule
\multicolumn{2}{l}{\autoref{sec:self-similarity}}\\
\midrule
\(\varphi,\, \psi\) & scaling function and wavelet in our basis \\
\(K,\, N\) & support and vanishing moments of \(\psi\) \\
\(j_0\) & minimum resolution of wavelet basis \\
\(\alpha_k,\,\beta_{j,k}\) & wavelet coefficients of \(f\) \\
\(C^s,\, C^s(M)\) & Besov spaces we wish to adapt over \\
\(\smax\) & maximum smoothness considered \\
\(f_{i,j}\) & truncated wavelet expansion of \(f\) \\
\(C^s_0(M)\) & space of self-similar functions \\
\(\varepsilon, \rho\) & self-similarity parameters \\
\midrule
\multicolumn{2}{l}{\autoref{sec:adaptation}}\\
\midrule
\(\sigma_\varphi\) & variance function of \(\varphi\) \\
\(C^s_1(M)\) & weaker space of self-similar functions \\
\(\rho_j\) & weaker self-similarity parameters \\
\midrule
\multicolumn{2}{l}{\autoref{sec:construction}}\\
\midrule
\(\fh(j_n)\) & truncated empirical wavelet expansion for \(f\)\\
\(\hat \alpha_j, \, \hat \beta_{j, k}\) & empirical wavelet coefficients \\
\(\jmin, \jmax\) & minimum and maximum resolution levels \\
\(\ja,\,\jc,\,\je\) & adaptive resolution levels \\
\(M^s_{i, j}\) & truncated H\"{o}lder norm of \(f\)\\
\(\sh,\,\Mh\) & estimated smoothness parameters\\
\(\Ca, \Ce\) & adaptive confidence bands\\
\(\lambda, \,\nu,\, \delta,\, \smin\) & parameters used to construct bands\\
\(\sfb, \tau_{\varphi}, \nu_{\varphi}\) & constants in bands depending on 
\(\varphi\)\\
\(j_1,\,j_2,\,j_3\) & resolution levels used to construct bands\\
\(a,\,b,\,c,\,x,\,R_1\) & quantities used to bound variance\\
\(l,\,R_2,\,R_3\) & quantities used to bound bias\\
\(c_{n,\mu},\,\gamma_n\) & other quantities in construction of bands\\
\bottomrule
\end{tabular}
\caption{Notation}
\label{tab:notation}
\end{table}
}

\section{Self-similar functions}
\label{sec:self-similarity}

To state our results, we must first define our self-similarity condition. We 
will need a wavelet basis of \(L^2([0,1]);\) for an introduction to wavelets, 
and their role in  statistical applications, see 
\citealp{haerdle_wavelets_1998}. We begin with  \(\varphi\) and \(\psi\), the 
scaling function and wavelet of an orthonormal multiresolution analysis on 
\(L^2(\R).\) We make the following assumptions on  \(\varphi\) and \(\psi,\) 
which are satisfied, for example, by Daubechies wavelets and symlets, with \(N 
\ge 6\) vanishing moments (\citealp[\S6.1]{daubechies_ten_1992}; 
\citealp[\S14]{rioul_simple_1992}).

\begin{assumption}\ \label{ass:wavelet-basis}
\begin{enumerate}
\item For \(K \in \N,\) \(\varphi\) and \(\psi\) are supported on the interval 
  \([1-K, K].\)
\item For \(N \in \N,\) \(\psi\) has \(N\) vanishing moments:
\[\int_\R x^i \psi(x) \, dx = 0, \qquad i=0,\dots,N-1.\]
\item \(\varphi\) is twice continuously differentiable.
\end{enumerate}
\end{assumption}

Using the construction of \citet{cohen_wavelets_1993}, we can then generate an 
orthonormal wavelet basis of \(L^2([0,1]),\) with basis functions 
\[\varphi_{j_0,k}, \quad k = 0, \dots, 2^{j_0}-1,\]
and
\[\psi_{j,k}, \quad j > j_0, \, k = 0, \dots, 2^j-1,\]
for some suitable lower resolution level \(j_0 > 0.\) (See also 
\citealp{chyzak_construction_2001}.) For \(k \in [N, 2^j - N),\) the basis 
functions are given by scalings of \(\varphi\) and \(\psi,\)
\[\varphi_{j_0, k}(x) \coloneqq 2^{j_0/2}\varphi(2^{j_0}x - k), \qquad 
\psi_{j, k} \coloneqq 2^{j/2}\psi(2^jx-k).\]
For other values of \(k,\) the basis functions are specially constructed, so 
as to form an orthonormal basis of \(L^2([0,1]),\) with desired smoothness 
properties.

Using this wavelet basis, we may proceed to define the spaces \(C^s\) over 
which we wish to adapt. Given a function \(f \in L^2([0, 1]),\)
\[f = \sum_k \alpha_k \varphi_{j_0,k} + \sum_{j>j_0}\sum_k \beta_{j,k} 
\psi_{j,k},\]
for \(s \in (0, N),\) define the \(C^s\) norm of \(f\) by
\[\norm{f}_{C^s} \coloneqq \max\left(\sup_k \,\abs{\alpha_k}, \, \sup_{j,\,k} 
\, 2^{j(s+1/2)}\abs{\beta_{j,k}}\right).\]
Define the spaces
\[C^s \coloneqq \{f \in L^2([0, 1]) : \norm{f}_{C^s} < \infty\},\]
and for \(M > 0,\)
\[C^s(M) \coloneqq \{f \in L^2([0, 1]): \norm{f}_{C^s} \le M\}.\]

For \(s \not \in \N,\) these spaces are equivalent to the classical H\"{o}lder 
spaces; for \(s \in \N,\) they are equivalent to the Zygmund spaces, which 
continuously extend the H\"{o}lder spaces \citep[\S4]{cohen_wavelets_1993}.
In either case, we may therefore take this to be our definition
of \(C^s\) in the following.

We are now ready to state our self-similarity condition. Denote the wavelet 
series of \(f,\) for resolution levels \(i\) to \(j,\) \(i > j_0,\) by
\[f_{i,j} \coloneqq \sum_{l=i}^j \sum_k \beta_{l,k} \psi_{l,k},\]
and for \(i = j_0,\) by
\[f_{j_0,j} \coloneqq \sum_k \alpha_k \varphi_{j_0,k} + f_{j_0 + 1, j}.\]
Fix some \(\smax \in (0, N);\) for \(s \in (0, \smax),\) \(M > 0,\) 
\(\varepsilon \in (0, 1),\) and \(\rho \in \N,\) we will say a function \(f 
\in C^s(M)\) is {\em self-similar}, if
\begin{equation}
  \label{eq:self-similar}
  \norm{f_{j, \rho j}}_{C^s} \ge \varepsilon M \ \forall \ j \ge j_0.
\end{equation}
If \(s = \smax,\) we will instead require \eqref{eq:self-similar} only for \(j 
= j_0.\) Denote the set of self-similar \(f \in C^s(M)\) by \(C^s_0(M, 
\varepsilon, \rho );\) for fixed \(\varepsilon,\) \(\rho ,\) we will denote 
this set simply as \(C^s_0(M).\)

The above condition ensures that the regularity of \(f\) is similar at small 
and large scales, and will be shown to be necessary to perform adaptive 
inference. To bound the bias of an adaptive estimator \(\fh_n,\) we need to 
know the regularity of \(f\) at small scales, which we cannot observe. If 
\(f\) is self-similar, however, we can infer this regularity from the 
behaviour of \(f\) at large scales, which we can observe.

Similar conditions have been considered by previous authors, in the context of 
turbulence by \citet{frisch_singularity_1985} and 
\citet{jaffard_frisch-parisi_2000},
and more recently in statistical applications by \citet{picard_adaptive_2000} 
and \citet{gine_confidence_2010}. We can show that condition 
\eqref{eq:self-similar} is weaker than the condition in 
\citeauthor{gine_confidence_2010}; we will see in \autoref{sec:adaptation} 
that it is, in a sense, as weak as possible.

\begin{proposition}
  \label{prop:weak-condition}
  Given \(\smin \in (0, \smax],\) \(b > 0,\) \(0 < b_1 \le b_2,\) and \(j_1 
  \ge j_0,\) there exist \(M > 0,\) \(\varepsilon \in (0, 1),\) and \(\rho  
  \in \N\) such that, for any \(s \in [\smin, \smax],\) the condition
  \begin{equation}
    \label{eq:gn-condition}
    f \in C^s \cap C^{\smin}(b), \qquad b_12^{-js} \le \norm{f_{j+1, 
    \infty}}_\infty \le b_22^{-js} \ \forall \ j \ge j_1,
  \end{equation}
  implies \(f \in C^s_0(M, \varepsilon, \rho).\)  Conversely, given \(s \in 
  (0, \smax],\) \(M > 0,\) \(\varepsilon \in (0, 1),\) and \(\rho  > 1,\) 
  there exist \(f \in C^s_0(M, \varepsilon, \rho)\) which do not satisfy the 
  above condition, for any \(\smin \in (0, s],\) \(b > 0,\) \(0 < b_1 \le 
  b_2,\) and \(j_1 \ge j_0.\)
\end{proposition}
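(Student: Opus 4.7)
\textbf{Forward direction.} Assume $f$ satisfies \eqref{eq:gn-condition}. The plan is to choose, depending on $\smin,b,b_1,b_2,j_1$ (but not on $s$ or $f$), constants $M$, $\rho\in\N$, and $\varepsilon\in(0,1)$, and then verify the two requirements of self-similarity in turn.

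First I would control $\|f\|_{C^s}$ uniformly. For levels $l>j_1$, the identity $f_{l,l}=f_{l,\infty}-f_{l+1,\infty}$ and the triangle inequality give $\|f_{l,l}\|_\infty\le 2b_2 2^{-(l-1)s}$; combined with $|\beta_{l,k}|\le\|f_{l,l}\|_\infty\|\psi_{l,k}\|_1 = 2^{-l/2}\|\psi\|_1\|f_{l,l}\|_\infty$, this bounds $2^{l(s+1/2)}\sup_k|\beta_{l,k}|$ by a constant depending only on $b_2,\smax$. For $l\le j_1$ and for the $\alpha_k$, the hypothesis $f\in C^{\smin}(b)$ plus $2^{l(s-\smin)}\le 2^{j_1\smax}$ gives a constant bound. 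Taking $M$ to be the maximum of these bounds establishes $f\in C^s(M)$ uniformly in $s\in[\smin,\smax]$.

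Next I would prove $\|f_{j,\rho j}\|_{C^s}\ge\varepsilon M$ for every $j\ge j_0$. The key estimate is that the upper bound implies $\|f_{\rho j+1,\infty}\|_\infty\le b_2 2^{-\rho j s}$, which is at most $\tfrac12 b_1 2^{-js}$ once $(\rho-1)js\ge\log_2(2b_2/b_1)$; by picking $\rho$ large (using $s\ge\smin$ and $j\ge j_0$, possibly after replacing $j$ by $\max(j,j_1)$ so the lower bound in \eqref{eq:gn-condition} applies) we get $\|f_{j'+1,\rho' j'}\|_\infty\ge \tfrac12 b_1 2^{-j's}$ for a suitable subinterval $[j'+1,\rho' j']\subset[j,\rho j]$. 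Then, using that $\psi_{l,k}$ has bounded overlap across $k$ and the norm estimate $\|\sum_k \beta_{l,k}\psi_{l,k}\|_\infty\lesssim 2^{l/2}\sup_k|\beta_{l,k}|$, I would sum a geometric series
\[\|f_{j'+1,\rho'j'}\|_\infty \;\lesssim\; \sup_{l\in[j'+1,\rho'j']} 2^{l(s+1/2)}\sup_k|\beta_{l,k}|\cdot\sum_{l>j'}2^{-ls}\]
to get $\|f_{j,\rho j}\|_{C^s}\gtrsim b_1(1-2^{-\smin})$, a positive constant independent of $s$ and $f$. Setting $\varepsilon$ below this constant divided by $M$ yields $\varepsilon\in(0,1)$ as required (treating the $j=j_0$ case for $s=\smax$ identically). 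The small-$j$ range $j_0\le j<j_1$ is absorbed by choosing $\rho$ large enough that $[j,\rho j]\supset[j_1+1,\rho'j_1]$ for all such $j$, i.e.\ $\rho\ge\rho'j_1/j_0$.

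\textbf{Converse direction.} Fix $s,M,\varepsilon,\rho$. I would construct $f$ with wavelet expansion supported on a single coefficient $\beta_{l_i,0}$ at each of a sparse sequence of levels $l_1<l_2<\dots$, with $l_1=j_0$ and $l_{i+1}=\lfloor\rho l_i\rfloor$, and set $\beta_{l_i,0}=\varepsilon M\,2^{-l_i(s+1/2)}$ (choosing $k_i$ in the interior so that $\psi_{l_i,k_i}$ is a pure scaling of $\psi$). Then $\|f\|_{C^s}=\varepsilon M\le M$, and for any $j\ge j_0$ the interval $[j,\rho j]$ contains some $l_i$ by the growth rule, giving $\|f_{j,\rho j}\|_{C^s}=\varepsilon M$; thus $f\in C^s_0(M,\varepsilon,\rho)$. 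On the other hand, for $j\in[l_i,l_{i+1}-1]$ only the coefficient at level $l_{i+1}$ contributes to $f_{j+1,\infty}$, so $\|f_{j+1,\infty}\|_\infty=\varepsilon M\|\psi\|_\infty 2^{-l_{i+1}s}$ is independent of $j$. The normalised ratio $2^{js}\|f_{j+1,\infty}\|_\infty$ then varies by a factor $2^{(l_{i+1}-l_i-1)s}$, which tends to infinity with $i$ because $\rho>1$ forces $l_{i+1}-l_i\to\infty$. Hence for every $(b_1,b_2,j_1)$ the two-sided bound fails at large $l_i$, and the remaining memberships $f\in C^s\cap C^{\smin}(b)$ can be discarded (or shown to hold up to rescaling).

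\textbf{Main obstacle.} The delicate point is the lower bound in Step~2: translating an $L^\infty$ lower bound on a wavelet partial sum into a lower bound on an individual coefficient, with constants uniform in $s\in[\smin,\smax]$ and independent of $j$. The geometric-series argument above, combined with the bounded-overlap property of $\psi_{l,k}$, should carry this through; the book-keeping around the boundary basis functions and the $j_0\le j<j_1$ range requires care but is not essentially different.
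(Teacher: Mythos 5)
Your proposal is correct and follows essentially the same approach as the paper: in the forward direction, both use the lower bound in \eqref{eq:gn-condition} at a low level together with the upper bound at a $\rho$-times-higher level and the geometric decay of the wavelet series to force a significant coefficient in $[j,\rho j]$ (you phrase it as a direct lower bound on $\|f_{j,\rho j}\|_{C^s}$, the paper as a contradiction, but the estimates are identical); and in the converse, both take a function supported on a single interior wavelet coefficient at each level of the sparse sequence $l_{i+1}=\rho l_i$, for which $2^{js}\|f_{j+1,\infty}\|_\infty$ collapses at $j=l_i$, violating the two-sided bound. The minor differences (bounding $\beta_{l,k}$ via $f_{l,l}$ rather than $\langle f_{l,\infty},\psi_{l,k}\rangle$, and using coefficient value $\varepsilon M$ rather than $M$ in the converse construction) do not change the substance.
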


In fact, we can show that self-similarity is a generic property:  that the set 
\(\mathcal D\) of self-dissimilar functions, which for some \(s\) never 
satisfy \eqref{eq:self-similar}, is in more than one sense negligible.  
Firstly, \comment{as in \citet[\S3.5]{gine_confidence_2010},} we can show that 
\(\mathcal D\) is nowhere dense: the self-dissimilar functions cannot 
approximate any open set in \(C^s(M).\) In particular, this means that 
\(\mathcal D\) is meagre.  Secondly, \comment{as in 
\citet[\S2.5]{hoffmann_adaptive_2011},} we can show that \(\mathcal D\) is a 
null set, for a natural probability measure \(\pi\) on \(C^s(M).\) We thus 
have that \(\pi\)-almost-every function in \(C^s(M)\) is self-similar.

\begin{proposition}
  \label{prop:negligible-set}
  For \(s \in (0, \smax]\) and \(M > 0,\)\comment{and \(\rho  \in \N,\)} 
  define
  \[\mathcal D \coloneqq C^s(M) \setminus \bigcup_{\varepsilon \in (0, 
  1),\,\rho \in \N} C^s_0(M, \varepsilon, \rho ).\]
  Further define a probability measure \(\pi\) on \(f \in C^s(M),\) with \(f\) 
  having independently distributed wavelet coefficients,
  \[\alpha_{k} \sim M2^{-j_0\sph}U([-1, 1]), \qquad \beta_{j,k} \sim 
  M2^{-j\sph}U([-1,1]).\]
  Then:
  \begin{enumerate}
    \item \(\mathcal D\) is nowhere dense in the norm topology of \(C^s(M);\) 
      and
    \item \(\pi(\mathcal D) = 0.\)
  \end{enumerate}
\end{proposition}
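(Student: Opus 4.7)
For part (i), I will show that every non-empty open $U \subset C^s(M)$ contains a non-empty open subset disjoint from $\mathcal D.$ Pick $f \in U;$ since $C^s(M)$ is star-shaped about the origin, by replacing $f$ with $(1-\eta/M) f$ for small $\eta > 0$ I may assume $\norm{f}_{C^s} \le M - \eta$ while staying in $U.$ I then perturb $f$ at every scale: for each $j > j_0,$ fix some index $k_j$ and perturb the wavelet coefficient $\beta_{j, k_j}$ by $\pm\eta' 2^{-j\sph},$ with sign chosen so that $\abs{\tilde\beta_{j, k_j}} \ge \eta' 2^{-j\sph},$ for some small $\eta' \le \eta.$ Since the wavelet basis is orthonormal, the resulting function $g$ satisfies $\norm{g - f}_{C^s} = \eta'$ and $\norm{g}_{C^s} \le (M - \eta) + \eta' \le M,$ so $g \in C^s(M).$

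The key point is that $g$ is robustly self-similar. For any $h$ with $\norm{h-g}_{C^s} < \eta'/2,$ the triangle inequality applied at each scale gives $2^{j\sph}\abs{\tilde h_{j,k_j}} \ge \eta'/2$ for every $j > j_0.$ Taking $\rho = 2,$ the level-$j$ perturbation lies in $[j, \rho j]$ for each $j > j_0,$ and the level-$(j_0{+}1)$ perturbation lies in $[j_0+1, 2j_0]$ for the $j = j_0$ case; hence $\norm{h_{j, \rho j}}_{C^s} \ge \eta'/2$ for all $j \ge j_0.$ If additionally $h \in C^s(M),$ then $h \in C^s_0\bigl(M, \eta'/(2M), 2\bigr),$ so $h \notin \mathcal D.$ Choosing $\eta'$ small enough that the ball $\{h : \norm{h-g}_{C^s} < \eta'/2\} \cap C^s(M)$ lies inside $U$ then completes (i). The principal subtlety is producing a single $g$ that achieves self-similarity at every scale while still respecting $\norm{g}_{C^s} \le M;$ this is resolved by orthogonality, which ensures that perturbations at different scales do not accumulate in norm.

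For part (ii), observe that under $\pi$ the scaled moduli $2^{j\sph}\abs{\beta_{j,k}}$ are independent and uniform on $[0,M],$ so for $\varepsilon_0 \in (0,1)$ and $\rho \ge 2,$
\[\pi\bigl(\norm{f_{j,\rho j}}_{C^s} < \varepsilon_0 M\bigr) \le \varepsilon_0^{\sum_{l=j}^{\rho j} 2^l} \le \varepsilon_0^{2^{\rho j}}\]
whenever $j > j_0.$ Since $\sum_j \varepsilon_0^{2^{\rho j}} < \infty,$ Borel--Cantelli yields an almost-sure (random) $J$ with $\norm{f_{j, \rho j}}_{C^s} \ge \varepsilon_0 M$ for all $j > J.$ The finitely many remaining $j \in [j_0, J]$ give strictly positive norms by continuity of the coefficient distributions, so their minimum yields some random $\varepsilon_1 > 0,$ whence $f \in C^s_0\bigl(M, \min(\varepsilon_0, \varepsilon_1), \rho\bigr)$ almost surely. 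The case $s = \smax$ is immediate, requiring only $\norm{f_{j_0, \rho j_0}}_{C^s} > 0$ a.s. Thus $\pi(\mathcal D) = 0.$
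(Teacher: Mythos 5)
Your proof is correct, and both parts arrive at the right conclusions by slightly different routes than the paper's argument.

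For part (i), the paper shows directly that, for each fixed $\rho,$ the set $\bigcup_{\varepsilon\in(0,1)}C^s_0(M,\varepsilon,\rho)$ is dense and open in $C^s(M)$: density via the ``snapping'' map $\beta_{j,k}\mapsto g(\beta_{j,k},\varepsilon M2^{-j\sph})$ that pushes \emph{every} small coefficient out to magnitude exactly $\varepsilon M2^{-j\sph}$ (this keeps $\abs{\beta'_{j,k}}\le\max(\varepsilon,1)M2^{-j\sph}=M2^{-j\sph}$ automatically, so no rescaling of $f$ is needed), and openness by observing that perturbing by at most $\tfrac12\varepsilon M$ in $C^s$ norm only halves the self-similarity constant. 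Your version instead shrinks $f$ by a factor $(1-\eta/M)$ to create slack and then lifts one coefficient per level; this also works, but costs you the extra bookkeeping of tracking $\eta$ versus $\eta'$ versus the radius of the ball inside $U,$ which the paper's snap-map construction avoids. One small correction: the appeal to orthonormality when computing $\norm{g-f}_{C^s}$ and bounding $\norm{g}_{C^s}$ is a red herring. The $C^s$ norm as defined here is a weighted $\ell^\infty$ norm on coefficient sequences, so the equality $\norm{g-f}_{C^s}=\eta'$ and the bound $\norm{g}_{C^s}\le(M-\eta)+\eta'$ follow from the ordinary triangle inequality coordinate by coordinate; orthogonality plays no role and in fact controls $L^2,$ not sup-type, quantities.

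For part (ii), your probability estimate $\pi(\norm{f_{j,\rho j}}_{C^s}<\varepsilon_0M)\le\varepsilon_0^{\sum_{l=j}^{\rho j}2^l}\le\varepsilon_0^{2^{\rho j}}$ is exactly the paper's bound on the event $A_j.$ You then deploy Borel--Cantelli plus absolute continuity of the coefficient laws to extract a random $\varepsilon,$ concluding $\pi$-a.s.\ that $f\in\bigcup_\varepsilon C^s_0(M,\varepsilon,\rho).$ The paper instead bounds $\pi\bigl(C^s(M)\setminus C^s_0(M,\varepsilon,\rho)\bigr)\le\sum_{j\ge j_0}\pi(A_j)\le\varepsilon^{2^{\rho j_0}}/(1-\varepsilon)$ and lets $\varepsilon\to0.$ The two routes are equivalent; the paper's is marginally shorter because it avoids the separate treatment of small $j$ and the random threshold $\varepsilon_1.$ Your observation that $s=\smax$ only needs the $j=j_0$ block is correct and consistent with the weaker condition stated in the paper for that endpoint.
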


These results are given for the self-similarity condition 
\eqref{eq:gn-condition} in \citet[\S3.5]{gine_confidence_2010}, and 
\citet[\S2.5]{hoffmann_adaptive_2011}; as a consequence of 
\autoref{prop:weak-condition}, they hold for our condition 
\eqref{eq:self-similar} also. \comment{Similar arguments can apply to other 
measures placed on wavelet coefficients; in particular, nonparametric Bayesian 
priors of the above form place zero mass on the set \(\mathcal D.\)} We 
conclude that the self-similar functions may be considered typical members of 
any class \(C^s(M).\)

\section{Self-similarity and adaptation}
\label{sec:adaptation}

We are now ready to state our main results. First, however, we will require an 
additional assumption on our wavelet basis, allowing us to precisely control 
the variance of our estimators. This assumption is verified for 
Battle-Lemari\'{e} wavelets in \cite{gine_periodized_2011}; for compactly 
supported wavelets, the assumption is difficult to verify analytically, but 
can be tested with provably good numerical approximations. In 
\citet[\S3]{bull_smirnov-bickel-rosenblatt_2011}, the assumption is shown to 
hold for Daubechies wavelets and symlets, with \(N = 6, \dots, 20\) vanishing 
moments. Larger values of \(N,\) and other wavelet bases, can be easily 
checked, and the assumption is conjectured to hold also in those cases.

\begin{assumption}
  \label{ass:sigma-maximum} The 1-periodic function
  \[\sigma^2_\varphi(t) \coloneqq \sum_{k \in \Z} \varphi(t - k)^2\]
  attains its maximum \(\overline \sigma^2_\varphi\) at a unique point \(t_0 
  \in [0, 1),\) and \((\sigma^2_\varphi)''(t_0) < 0.\)
\end{assumption}

We may now construct a confidence band which, under self-similarity, is exact, 
honest for all \(M > 0,\) and contracts at a near-optimal rate.  We centre the 
band at an undersmoothed estimate of \(f\):
an estimate slightly rougher than optimal, chosen so that the known variance 
dominates the unknown bias (as in \citealp{hall_effect_1992}, for example). 
This allows us to construct an asymptotically exact confidence band, although 
the larger variance leads to a logarithmic rate penalty. We state our results 
for the white noise model, which serves as an idealisation of density 
estimation and regression; we will return later to consequences for the other 
models.

\begin{theorem}
  \label{thm:exact-band}

  In the white noise model, fix \(0 < \gamma < 1,\) \(\smin \in (0, \smax],\) 
  and set
  \[r_n(s) \coloneqq (n/\log n)^{-s/(2s+1)}\log n, \qquad \mathcal F \coloneqq 
  \bigcup_{s \in [\smin, \smax], \, M > 0}C^s_0(M).\]
  There exists a confidence band \(\Ce \coloneqq \Ce(\gamma, \smin, \smax, 
  \varepsilon, \rho)\) as in \eqref{eq:band-defn}, with radius \(\Re,\)
  satisfying:
  \begin{enumerate}
    \item \(\sup_{f \in \mathcal F} \abs{\P(f \not\in \Ce) - \gamma} \to 0;\) 
      and
    \item for a fixed constant \(L > 0,\) and any \(s \in [\smin, \smax],\) 
      \(M > 0,\)
      \[\sup_{f \in C^s_0(M)} \P_f\left(\Re > LM^{1/(2s+1)} r_n(s) \right) \to 
      0.\]
  \end{enumerate}
\end{theorem}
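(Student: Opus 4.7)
The plan is to centre $\Ce$ at an undersmoothed linear wavelet estimator $\fh(\jhe)$ and to calibrate its radius via the Gaussian extreme-value limit for the supremum of its stochastic part. The empirical wavelet coefficients $\hat\alpha_k,\hat\beta_{j,k}$, obtained by integrating the basis functions against the observed white noise, are independent Gaussians of variance $1/n$ centred at the true coefficients. Consequently $\fh(j)-\E\fh(j)$ is a Gaussian process with pointwise variance $2^j\sigma_\varphi^2(2^jx)/n$, so \autoref{ass:sigma-maximum} places us in the Smirnov--Bickel--Rosenblatt regime: after centring and scaling by $\sqrt{2^j/n}$, its supremum over $[0,1]$ converges to a Gumbel law with explicit location depending on $\sfb$ and $\sqrt{\log 2^j}$. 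For deterministic $\jhe$ this would immediately give an exact band; the work lies in choosing $\jhe$ adaptively without destroying the limit.

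The construction of $\jhe$ proceeds in two stages, both leaning on self-similarity. First I would form estimates $\sh$ and $\Mh$ of $(s,M)$ from large-scale empirical coefficients, by fitting a log-linear model to the block maxima of $\abs{\hat\beta_{l,k}}$ over dyadic ranges of resolution levels $l\in[j,\rho j]$. Condition~\eqref{eq:self-similar} forces the corresponding population block norms to lie in $[\varepsilon M, M]$, so after rescaling by $2^{j(s+1/2)}$ their decay rate in $j$ identifies $s$ while their absolute level identifies $M$, uniformly in $f\in\mathcal F$ on the event that the empirical block maxima separate from the noise floor of order $\sqrt{j/n}$. Reading the exponent off a \emph{ratio} of block maxima is essential here, since it avoids any a priori bound on $M$. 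Second, I would set $\jhe=\je(\sh,\Mh)$, where the oracle undersmoothed level $\je(s,M)$ is slightly larger than the rate-optimal $\ja(s,M)$, chosen so that $M\,2^{-\je s}$ is a factor of order $\log n$ smaller than $\sqrt{2^{\je}/n}$.

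The radius $\Re$ is then the $(1-\gamma)$-Gumbel quantile translated and scaled by the appropriate functions of $\jhe$ and $n$. On the high-probability event where $(\sh,\Mh)$ is consistent, the true bias $\norm{f-\E\fh(\jhe)}_\infty\lesssim M\,2^{-\jhe s}$ is negligible at the scale of $\Re$, so the Smirnov--Bickel--Rosenblatt limit applies conditionally on $\jhe$ and delivers~(i), while the bound $\jhe\lesssim\je(s,M)$ gives $\Re\lesssim M^{1/(2s+1)}r_n(s)$, and hence~(ii). The principal obstacle is to make $(\sh,\Mh)$ consistent uniformly over $(s,M)\in[\smin,\smax]\times(0,\infty)$ with error probabilities $o(1)$, and in particular with no a priori ceiling on $M$; self-similarity is the essential leverage, replacing the usual variance-based thresholding (which would require $M$) by a scale-invariant comparison of block maxima across levels.
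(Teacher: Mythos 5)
Your broad architecture matches the paper's: centre at an undersmoothed linear estimator $\fh(\jhe)$, estimate $(s,M)$ from ratios of empirical block norms (which is exactly how the paper avoids an a priori bound on $M$), push the undersmoothed level $\jhe$ about $\log n$ beyond the class level $\jhc$, and invoke the Smirnov--Bickel--Rosenblatt Gumbel limit for the Gaussian part.

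However, there is a genuine gap at the core of part~(i). You assert that the Gumbel limit ``applies conditionally on $\jhe$'' on the consistency event, but $\jhe$ is computed from the same empirical wavelet coefficients that generate the supremum you are calibrating; conditioning on $\jhe$ changes the law of that noise, so the limit does not follow. This is the whole difficulty, and the paper's proof works precisely to dismantle it: it compares the target functional $F(j) = d(j, \norm{\fh(j)-f}_\infty)$ to $G(j) = d(j, \norm{\fh(j)-\fb(j)}_\infty)$ and to an auxiliary $H(j)$ that is built only from coefficients at resolutions strictly above the data-driven Lepskii index $\ju$. Since $\jhe$ depends only on coefficients at levels $\le \jha \le \ju$, while $H(j)$ depends only on levels $> \ju$, they are genuinely independent, so the SBR limit for $H$ survives conditioning on $\jhe$. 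The proof then shows $|F - G| = o(1)$ uniformly (undersmoothing kills the bias) and $|G - H| = o(1)$ uniformly (low-frequency noise is negligible once the gap $\jc(\ll,1) - \jc(\ll,\nu) \to \infty$, which requires the near-adaptive choice $\nu > 1$, a parameter your sketch does not surface). Without this decoupling device, your argument for exactness has no force. For part~(ii), the rate bound from $\jhe \lesssim \je + O(1)$ is as you describe; the missing piece is confined to~(i).
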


We can do better by dropping the requirement of exactness. Intuitively, we may 
feel that an exact band should be preferable: given an inexact band, surely we 
can modify it to produce something more accurate?
In fact, this is not necessarily the case. Consider a simplified statistical 
model, where we wish to identify a parameter \(\theta \in \R,\) and have the 
luxury of observing data \(X = \theta.\)
The optimal confidence set for \(\theta\) is thus \(\{X\},\) but this set is 
not exact at the 95\% level. We can produce an exact set by adding noise: if 
\(Z \sim N(0, 1),\) the confidence set
\[\{x \in \R : \abs{X+Z - x} \le \Phi^{-1}(0.975)\}\]
is exact at the 95\% level. It is also clearly inferior. The perfect, inexact 
set is preferable to the imperfect, exact one.

The situation is similar in nonparametrics. We can undersmooth, adding noise 
to produce an exact band, but in doing so we make our band both asymptotically 
larger, and less likely to contain the function \(f.\) In practise, this is 
clearly undesirable. Instead, we will give one of the main results of this 
paper: we will provide an inexact band, centred at an adaptive Lepskii-type 
estimator, which under self-similarity is honest over a larger family of 
functions, and exact rate-adaptive with respect to \(s\) and \(M.\)

\begin{theorem}
  \label{thm:adaptive-band}
  In the white noise model, fix \(0 < \gamma < 1,\) and set
  \[r_n(s) \coloneqq (n/\log n)^{-s/(2s+1)}, \qquad \mathcal F \coloneqq 
  \bigcup_{s \in (0, \smax], M > 0} C^s_0(M).\]
  There exists a confidence band \(\Ca \coloneqq \Ca(\gamma, \smax, 
  \varepsilon, \rho)\) as in \eqref{eq:band-defn}, with radius \(\Ra,\) 
  satisfying:
  \begin{enumerate}
    \item \(\limsup_n \sup_{f \in \mathcal F} \P(f \not\in \Ca) \le \gamma;\) 
      and
    \item for a fixed constant \(L > 0,\) and any \(s \in (0, \smax],\) \(M > 
      0,\)
      \[\sup_{f \in C^s_0(M)} \P_f\left(\Ra > 
      \frac{LM^{1/(2s+1)}}{2^s-1}r_n(s) \right) \to 0.\]
  \end{enumerate}
\end{theorem}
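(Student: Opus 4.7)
My plan is to centre the band at a data-driven linear wavelet estimator whose resolution is chosen by a Lepskii-type rule, and to bound its radius by combining a classical variance envelope with a bias proxy extracted directly from the self-similarity condition \eqref{eq:self-similar}. Let $\fh(j) \coloneqq \sum_k \hat\alpha_k \varphi_{j_0,k} + \sum_{l=j_0+1}^{j}\sum_k \hat\beta_{l,k}\psi_{l,k}$ denote the linear wavelet estimator built from the empirical coefficients of the white-noise observations, defined on the dyadic grid $\jmin \le j \le \jmax$ with $2^{\jmin}\asymp\log n$ and $2^{\jmax}\asymp n/\log n$.

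First I would control the stochastic term. The difference $\fh(j) - f_{j_0,j}$ is a centred Gaussian process whose supremum, via Assumption \ref{ass:sigma-maximum} and standard chaining, is dominated by a deterministic envelope $V_n(j) \coloneqq c_1 \sqrt{j 2^j/n}$ simultaneously over the grid with probability at least $1-\gamma$. Second, I would use self-similarity to bound the bias. For $f \in \CssM$, \eqref{eq:self-similar} together with the dyadic structure of the $C^s$-norm shows that $\|f-f_{j_0,j}\|_\infty$ is squeezed between universal constant multiples of $\max_{j \le l \le \rho j}\,2^{l/2}\max_k|\beta_{l,k}|\cdot 2^{-l s}$-type quantities; replacing the true coefficients by their empirical versions and subtracting the envelope $V_n$ yields a purely data-driven bias proxy $B_n(j)$, observable because all scales $l\le\rho j\le\jmax$ lie in the grid, which on the good event sandwiches $\|f-f_{j_0,j}\|_\infty$ between constants depending only on $\varepsilon,\rho,\smax$. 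I then take $\jha$ to be the smallest $j$ with $B_n(j) \le V_n(j)$, and set $\Ra \coloneqq c_2(B_n(\jha)+V_n(\jha))$, with the band $\Ca \coloneqq \{g : \|g-\fh(\jha)\|_\infty \le \Ra\}$.

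For claim (i), the bias-variance bounds just described imply $\|\fh(\jha)-f\|_\infty \le \Ra$ on the good event, delivering honesty $\limsup_n \sup_{f\in\mathcal F}\P(f\notin\Ca)\le\gamma$. The key point is that $B_n$ is defined from data alone, so the argument is uniform in $M$ without an a priori bound, which is what allows us to avoid undersmoothing and the associated $\log n$ penalty of \autoref{thm:exact-band}. For the rate (ii), let $j^*(s,M)$ be the oracle level at which bias and variance balance, roughly $2^{j^*}\asymp (M^2 n/\log n)^{1/(2s+1)}$. Self-similarity forces $B_n(j)\gg V_n(j)$ for $j<j^*$ and $B_n(j)\ll V_n(j)$ for $j\ge j^*$, so on the good event $\jha\le j^*$. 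The geometric tail $\sum_{l\ge j}2^{-ls}M\asymp M 2^{-js}/(2^s-1)$ then gives $\Ra\lesssim M^{1/(2s+1)}(2^s-1)^{-1}r_n(s)$, with the factor $1/(2^s-1)$ precisely as stated.

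The main obstacle is calibrating the constants $c_1,c_2$ and the thresholds inside $B_n$ so that (a) the coverage event has probability at least $1-\gamma$ uniformly in $(s,M)$ and $j$ in the grid, and (b) the sandwich between $B_n$ and the true bias holds with $M$-free constants. Obstruction (a) requires the Gaussian maximal inequality to be sharp enough that the logarithmic factor in $V_n$ stays at the adaptive level $\sqrt{j}$ and not $\sqrt{\log n}$; obstruction (b) is where the per-scale form of \eqref{eq:self-similar} is essential, since a purely asymptotic self-similarity condition would force the constants to depend on $M$ through the starting scale, and honesty would fail uniformly as $M\to\infty$. Overcoming both simultaneously, without sample splitting, is the technical heart of the argument.
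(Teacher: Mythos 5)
Your high-level architecture — a Lepskii-type data-driven resolution level, a bias--variance split, self-similarity to make the bias estimable, no sample splitting — matches the paper's. But there is a genuine gap in the construction and claimed properties of the bias proxy \(B_n(j)\).

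You claim \(B_n(j)\) is ``observable because all scales \(l \le \rho j \le \jmax\) lie in the grid'' and that on the good event it sandwiches \(\norm{f - f_{j_0,j}}_\infty\) with constants depending only on \(\varepsilon, \rho, \smax\). Neither claim holds. First, the bias \(\norm{f - f_{j_0,j}}_\infty\) contains contributions from every resolution \(l > j\), in particular from all \(l > \jmax\); these are \emph{unobservable}, and they are bounded only through the H\"older structure \(\sup_k \abs{\beta_{lk}} \le M 2^{-l(s+1/2)}\), which requires knowledge (or consistent estimates) of \(s\) and \(M\). Your \(B_n\), built from empirical coefficients in a window ending at \(\rho j\), cannot see this tail at all, and for \(M\) large — exactly the regime where the theorem's uniform-in-\(M\) honesty is nontrivial — the tail dominates the radius and coverage fails. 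Second, even at the level of deterministic bounds, self-similarity does not give a universal-constant sandwich on the bias: the lower bound it yields from the window \([j, \rho j]\) is of order \(\varepsilon M 2^{-\rho j s}\), while the upper bound from the \(C^s(M)\) envelope is of order \(M 2^{-js}\), and the ratio \(\varepsilon^{-1} 2^{(\rho-1)j s}\) is unbounded in \(j\). So the claim ``sandwiched with \(M\)-free, \(j\)-free constants'' cannot be made true by any such \(B_n\).

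What you are missing is the ingredient the paper actually uses: a consistent estimate of the smoothness pair \((s,M)\). The paper compares wavelet coefficients on a fixed low-resolution window \([j_0, \rho j_0]\) against coefficients near the Lepskii level \([\lfloor \jha/\rho \rfloor, \jha]\); the ratio of these magnitudes pins down \(s\) (to accuracy \(O(1/\log n)\)) and then \(M\), and these feed into an analytic far-tail bias bound of the form \(\tau_\varphi \Mh 2^{-l\sh}/(2^{\sh}-1)\), which is the source of the \(1/(2^s-1)\) factor in the statement. The paper's radius is correspondingly a three-term sum (extreme-value variance term with a Bonferroni correction over levels; a ``middle-tail'' bias term up to the classical level; and the estimated ``far-tail'' term), not \(c_2(B_n + V_n)\). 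Without an explicit \(\sh\)-estimation step your argument cannot bound the unobservable tail, and the uniform-in-\(M\) honesty and the stated rate both break down.

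A minor further point: your \(\jmin\) has \(2^{\jmin} \asymp \log n\), whereas the paper requires \(2^{\jmin} \sim (n/\log n)^{1/(2N+1)}\). The paper's choice is not incidental; it ensures the low-resolution window used for smoothness estimation stays below \(\jmin\), and that the accuracy of \(\sh\) is \(O(1/\log n)\), which is exactly what is needed for continuous-\(s\) adaptation.
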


The constant in the above rate contains an extra \(1/(2^s-1)\) term, which is 
present to allow for \(s\) tending to 0. Note that if, as before, we restrict 
to \(s \ge \smin > 0,\) we may then fold this term into the constant \(L,\)  
producing a rate of the same form as in \autoref{thm:exact-band}.

As is standard, the rates adapt only to smoothnesses \(s \le \smax;\) if \(f\) 
is smoother than our wavelet basis, we cannot reliably detect this from the 
wavelet coefficients. However, our self-similarity condition 
\eqref{eq:self-similar} is weaker when \(s = \smax,\) and the class 
\(C^{\smax}_0(M)\) contains many smoother functions \(f;\)
in this case we obtain the rate of contraction optimal for \(C^{\smax}(M).\)

\autoref{thm:adaptive-band} is, in more than one sense, maximal.  Firstly, we 
can verify that the minimax rate of estimation over \(C^s_0(M)\) is the same 
as over \(C^s(M).\) Since any adaptive confidence band must be centred at an 
adaptive estimator, we may conclude that the above results are indeed optimal.

\begin{theorem}
  \label{thm:minimax-rate}
  In the white noise model, fix \(0 < \gamma < \frac12,\) \(s \in (0, 
  \smax],\) \(M > 0.\) An estimator \(\fh_n\) cannot satisfy
  \[\limsup_n \sup_{f \in C^s_0(M)} \P_f\left(\norm{\fh_n -f}_\infty \ge 
  r_n\right) \le \gamma,\]
  for any rate \(r_n = o\left( (n/\log n)^{-s/(2s+1)} \right).\)
\end{theorem}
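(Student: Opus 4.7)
The plan is to adapt the standard Fano-type minimax argument for the $L^{\infty}$ rate over $C^s(M)$, by constructing a packing of hypotheses that lie inside the smaller set $C^s_0(M,\varepsilon,\rho)$. Choose $j_n \in \N$ with $2^{j_n} \asymp (n/\log n)^{1/(2s+1)}$, so that $2^{-j_n s} \asymp (n/\log n)^{-s/(2s+1)}$.

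For the base function, set
\[f_0 \coloneqq \sum_{l > j_0} \varepsilon M\, 2^{-l\sph}\, \psi_{l,0},\]
which satisfies $\|f_0\|_{C^s} = \varepsilon M$ and, for each $j \ge j_0$, $\|(f_0)_{j,\rho j}\|_{C^s} \ge \varepsilon M$, witnessed by the single coefficient at level $\max(j,j_0+1)$ and $k=0$. Then introduce a packing set $K \subset \{2K-1, 2(2K-1), \dots\} \cap [0, 2^{j_n} - N)$ chosen so that the supports of $\{\psi_{j_n,k}\}_{k \in K}$ are pairwise disjoint and disjoint from the support of $\psi_{j_n,0}$; this gives $|K| \asymp 2^{j_n}$. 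For $\theta \in \{0,1\}^{K}$, set
\[f_\theta \coloneqq f_0 + c_0 M'\sum_{k \in K} \theta_k\, 2^{-j_n\sph}\psi_{j_n,k},\]
where $M' > 0$ is a small constant and $c_0 > 0$ is an absolute constant to be fixed.

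The next step is to check that $f_\theta \in C^s_0(M,\varepsilon,\rho)$. Since the perturbation lives at level $j_n$ at positions disjoint from $k=0$, each $(f_\theta)_{j,\rho j}$ still inherits the coefficient of $f_0$ at level $j$, $k=0$, giving $\|(f_\theta)_{j,\rho j}\|_{C^s} \ge \varepsilon M$, and the overall $C^s$-norm is $\max(\varepsilon M,\, c_0 M') \le M$ once $c_0 M' \le (1-\varepsilon)M$. The bumps have disjoint supports and sup-norm $c_0 M'\|\psi\|_\infty 2^{-j_n s}$, so for any $\theta \ne \theta'$ the separation satisfies $\|f_\theta - f_{\theta'}\|_\infty = c_0 M'\|\psi\|_\infty 2^{-j_n s} \eqqcolon 2\delta_n$, with $\delta_n \asymp (n/\log n)^{-s/(2s+1)}$.

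For the testing reduction, consider the subfamily $\{f_0\} \cup \{f_{e_k} : k \in K\}$, where $e_k$ denotes the $k$-th standard basis vector. In the white noise model the KL divergence is $\mathrm{KL}(P_{f_0}, P_{f_{e_k}}) = \tfrac{n}{2}\|f_{e_k} - f_0\|_2^2 = \tfrac{n}{2} c_0^2 M'^2 2^{-j_n(2s+1)} \asymp M'^2 \log n$, uniformly in $k$. Suppose, for contradiction, that some estimator $\fh_n$ achieved $\sup_{f \in C^s_0(M)} \P_f(\|\fh_n - f\|_\infty \ge r_n) \le \gamma + o(1)$ for a rate $r_n = o(\delta_n)$. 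Then the nearest-neighbour test $\hat\theta \coloneqq \arg\min_\theta \|\fh_n - f_\theta\|_\infty$ (over our $|K|+1$ hypotheses) would correctly recover $\theta$ with probability at least $1 - \gamma - o(1)$, since the separation exceeds $2r_n$ eventually. Applying Fano's inequality to the uniform prior on these $|K|+1 \asymp 2^{j_n} \asymp \log$-many-in-$n$ alternatives gives
\[\P(\text{error}) \ge 1 - \frac{c\,M'^2 \log n + \log 2}{\log|K|} \ge 1 - \frac{c'M'^2}{\log 2} - o(1),\]
for absolute constants $c,c'$. Choosing $M'$ small enough that the right-hand side exceeds $\gamma$ contradicts the assumed error level, completing the proof.

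The only subtlety beyond the classical argument is maintaining the self-similarity constraint for every alternative simultaneously, which is handled by placing perturbations away from the coefficients of $f_0$ that witness \eqref{eq:self-similar}; everything else mirrors the standard lower bound construction (e.g.\ \citealp[Ch.~2]{tsybakov_introduction_2009}).
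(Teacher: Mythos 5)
Your argument is correct and reaches the right conclusion, but it runs on a different engine from the paper's. You invoke Fano's inequality over a packing of $|K|+1 \asymp 2^{j_n}$ hypotheses, tuning the perturbation amplitude $c_0 M'$ small so that the mutual information $\asymp (c_0M')^2\log n$ is dominated by $\log|K| \asymp \log n/(2s+1)$. The paper instead reduces to the Ingster-style second-moment bound of \autoref{lem:normal-mean-test}: it keeps the perturbation at full weight $M2^{-j\sph}$ and exploits the fact that $r_n = o\left((n/\log n)^{-s/(2s+1)}\right)$ forces $j$ to grow just enough that $\mu^2 = nM^22^{-j(2s+1)} = o(j)$, so the $\chi^2$-type bound $1 - 2^{-j/2}(e^{\mu^2}-1)^{1/2}$ tends to $1$ without shrinking the signal. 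The constructions of the base self-similar function also differ: you put coefficients of weight $\varepsilon M$ at every level $l>j_0$, while the paper places full-weight coefficients only at the geometrically spaced levels $j_{i+1}=\rho j_i+1$; both manifestly satisfy \eqref{eq:self-similar} once the perturbations are placed at indices $k$ disjoint from the base coefficient (and, to be safe about boundary wavelets, it is cleaner to put the base coefficient at an interior index $k\in[N,2^j-N)$ rather than $k=0$, and the same goes for the paper's $\psi_{j_i,0}$). What each buys: your Fano route is self-contained and well-known, and in fact gives a contradiction for any $\gamma<1$ rather than just $\gamma<1/2$; the paper's route reuses \autoref{lem:normal-mean-test}, whose extra "perturbation" term $\xi$ is exactly what is needed later in the proof of \autoref{thm:minimal-conditions}, so it is more economical within the paper. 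One small bookkeeping slip in your sketch: the Fano denominator should be $\log|K| \asymp j_n\log 2 \asymp \log n/(2s+1)$, not $\log 2$; this only changes constants and leaves the conclusion intact.
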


Secondly, we can show that the self-similarity condition 
\eqref{eq:self-similar} is, in a sense, as weak as possible. In 
\eqref{eq:self-similar}, the function \(f\) is required to have significant 
wavelet coefficients on resolution levels \(j\) growing at most geometrically. 
If we relax this assumption even slightly, allowing the significant 
coefficients to occur less often, then adaptive inference is impossible.

For \(s \in (0, \smax),\) \(M > 0,\) denote by \(C^s_1(M)\) the set of \(f \in 
C^s(M)\) satisfying the slightly weaker self-similarity condition,
\[\norm{f_{j,\rho _jj}}_{C^s} \ge \varepsilon M \ \forall \ j \ge j_0,\]
for fixed \(\varepsilon > 0,\) and \(\rho _j \in \N,\) \(\rho _j \to \infty.\)
Even allowing dishonesty, and with known bound \(M\) on the H\"{o}lder norm, 
we cannot construct a confidence band which adapts to classes \(C^s_1(M).\)

\begin{theorem}
  \label{thm:minimal-conditions}
  In the white noise model, fix \(0 < \gamma < \frac12,\) \(0 < s_{\min} < 
  s_{\max},\) and \(M > 0.\) Set
  \[r_n(s) \coloneqq (n/\log n)^{-s/(2s+1)}, \qquad \mathcal F \coloneqq 
  \bigcup_{s \in (\smin, \smax)} C^s_1(M).\]
  A confidence band \(C_n,\) with radius \(R_n,\) cannot satisfy:
  \begin{enumerate}
    \item \(\limsup_n \P_f(f \not\in C_n) \le \gamma,\) for all \(f \in 
      \mathcal F;\) and
    \item \(R_n = O_p(r_n(s))\) under \(\P_f,\) for all \(f \in C^s_1(M),\) 
      \(s \in (\smin, \smax).\)
  \end{enumerate}
\end{theorem}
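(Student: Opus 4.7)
The plan is to proceed by contradiction via a Le Cam two-point argument. Assume both (i) and (ii) hold. I aim to construct, for each sufficiently large $n$, two hypotheses $F_0^{(n)} \in C^{s_1}_1(M)$ and $F_1^{(n)} \in C^{s_0}_1(M)$, with $\smin < s_0 < s_1 < \smax$ (possibly depending on $n$), such that the KL divergence $\tfrac{n}{2}\|F_0^{(n)} - F_1^{(n)}\|_2^2$ tends to zero yet the sup-norm ratio $\|F_0^{(n)} - F_1^{(n)}\|_\infty / r_n(s_1)$ tends to infinity.

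Given such hypotheses the contradiction is standard. Under $\P_{F_0^{(n)}}$, conditions (i) and (ii) give jointly that $F_0^{(n)} \in C_n$ and $R_n \leq K r_n(s_1)$ with probability at least $1 - \gamma - o(1)$, where $K = K(n) \to \infty$ may be taken to grow slowly; the sup-norm separation then forces $F_1^{(n)} \not\in C_n$ on this event. Le Cam's inequality transfers this lower bound to $\P_{F_1^{(n)}}$ modulo a vanishing total variation distance, giving $\P_{F_1^{(n)}}(F_1^{(n)} \not\in C_n) \geq 1 - \gamma - o(1)$; combined with (i) applied to $F_1^{(n)}$, this forces $\gamma \geq 1/2 - o(1)$, contradicting $\gamma < 1/2$.

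For the construction I would take $F_0^{(n)}$ a sparse element of $C^{s_1}_1(M)$, with wavelet coefficients $|\beta_{L_m,0}| \asymp M \, 2^{-L_m(s_1+1/2)}$ on a scale sequence $(L_m)$ generated inductively by $L_{m+1} = \rho_{L_m + 1}(L_m + 1)$, so that the $C^{s_1}$-self-similarity holds on every band $[j,\rho_j j]$. The alternative $F_1^{(n)}$ shares the low-scale coefficients of $F_0^{(n)}$ but, at scales above the threshold $J_n^\ast = \log(1/\varepsilon)/((s_1 - s_0)\log 2)$, is instead chosen with coefficients at the $C^{s_0}$-self-similarity lower bound $\varepsilon M \, 2^{-L(s_0+1/2)}$; this is the largest $J$ below which a single coefficient can simultaneously satisfy the $C^{s_1}$ upper bound and the $C^{s_0}$ self-similarity lower bound.

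The principal obstacle is the tension in choosing $s_0 = s_0(n)$. One needs $s_0$ close enough to $s_1$ that $J_n^\ast$ grows with $n$ and the $L^2$-mass of the perturbation, of order $2^{-J_n^\ast(2s_0+1)}$, vanishes faster than $n^{-1/2}$; yet one needs $s_0$ far enough below $s_1$ that the rate ratio $r_n(s_0)/r_n(s_1) \propto (n/\log n)^{(s_1-s_0)/((2s_0+1)(2s_1+1))}$ diverges. Crucially it is the unboundedness $\rho_j \to \infty$ that makes this balance achievable: at scales $j \asymp J_n^\ast$, the admissible placement interval $[j,\rho_j j]$ for the first differing coefficient of $F_1^{(n)}$ has length $(\rho_j - 1)j$ growing without bound, allowing us to place that coefficient well beyond the observation threshold $\log_2 n/(2s_0+1)$. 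Under a bounded $\rho_j$ the construction breaks down, consistent with the positive adaptive result of \autoref{thm:adaptive-band}.
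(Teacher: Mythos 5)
Your high-level intuition is right on two counts: you correctly identify that the self-similar structure must be enforced at every scale (via a sparse chain of coefficients along a sequence $L_m$ generated by $\rho$), and you correctly locate the role of $\rho_j \to \infty$ — it is what makes the gaps between required coefficient scales grow, giving room to insert an indistinguishable perturbation beyond the observation threshold. Both points echo what the paper does. However, the overall argument as proposed has a genuine gap that the paper's proof is specifically engineered to avoid.

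The problem is that conditions (i) and (ii) are \emph{dishonest} (pointwise in $f$) conditions: (i) reads $\limsup_n \P_f(f \notin C_n) \le \gamma$ for each \emph{fixed} $f \in \mathcal F$, and (ii) reads $R_n = O_p(r_n(s))$ under $\P_f$ for each \emph{fixed} $f$. Neither gives you any bound at a finite time $n$ for a hypothesis $F_0^{(n)}$ that itself depends on $n$. Your step ``Under $\P_{F_0^{(n)}}$, conditions (i) and (ii) give jointly that $F_0^{(n)} \in C_n$ and $R_n \le K r_n(s_1)$ with probability at least $1 - \gamma - o(1)$'' is exactly the step that a dishonest band is not obliged to deliver: the $\limsup$ can be attained arbitrarily late, and that lateness can depend on the function. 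The same issue recurs when you apply (i) to $F_1^{(n)}$ at the end. This is not a technicality — if the band were honest (uniform over $\mathcal F$), the much cruder fact that adaptive honest bands do not exist over $\bigcup_s C^s(M)$ would already yield the conclusion; the whole content of the theorem is that even dishonesty does not help once $\rho_j \to \infty$.

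The paper's proof is structured precisely to respect dishonesty: it constructs a \emph{single fixed} function $f_\infty$, together with a sequence of fixed approximants $f_{m-1} \in C^{s_{m-1}}_1(M)$ and a subsequence of times $n_m \to \infty$. For each fixed $m$, conditions (i) and (ii) applied to the fixed $f_{m-1}$ do give, for $n_m$ large enough (and this is where dishonesty is absorbed), that $\P_{f_{m-1}}(f_{m-1} \notin C_{n_m}) \le \gamma + \delta$ and $\P_{f_{m-1}}(|C_{n_m}| \ge D r_{n_m}) \le \delta$. Then $\|f_\infty - f_{m-1}\|_\infty \gg r_{n_m}(s_{m-1})$ while the Kullback--Leibler distance is small, so by a perturbed version of the normal-means testing bound (their Lemma A.3, with the extra $\xi^2$ term to absorb the infinitely many small tail coefficients that differ between $f_{m-1}$ and $f_\infty$), coverage fails under $\P_{f_\infty}$ at each time $n_m$. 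Since $f_\infty$ is fixed, this contradicts (i) applied to $f_\infty$. To repair your argument you would need to replace the $n$-indexed two-point scheme by this recursive single-limit construction, and you would also need the two-sided testing lemma that handles both the single large ``detection'' coordinate and the $\ell^2$-small cloud of remaining differences.
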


As a consequence, we firstly cannot adapt to the full classes \(C^s(M).\) More 
importantly, we cannot, as in \citet{hoffmann_adaptive_2011}, obtain 
adaptation merely by removing elements of the classes \(C^s(M)\) which are 
asymptotically negligible. In order to construct adaptive bands, we must fully 
exclude some functions \(f\) from consideration, and this remains true even 
when \(M\) is known.

The difference between these problems lies in the accuracy to which we must 
estimate \(s.\) To distinguish between finitely many classes, we need to know 
\(s\) only up to a constant; to adapt to a continuum of smoothness, we must 
know it with error shrinking like \(1/\log n.\) The finite-class problem is in 
this sense more like the \(L^2\) adaptation problem studied in 
\citet{bull_adaptive_2011}; the distinctive nature of the \(L^\infty\) 
adaptation problem is revealed only when requiring adaptation to continuous 
\(s.\)

While the above theorems are stated for the white noise model, we can prove 
similar results for density estimation and regression. The following theorem 
gives a construction of adaptive bands in these models; other results can be 
proved, for example, as in \citet{gine_confidence_2010}, and 
\citet{bull_adaptive_2011}.

\begin{theorem}
  \label{thm:other-models}
  In the density estimation model, let \(\smin \in (0, \smax],\) or in the 
  regression model, \(\smin \in [\tfrac12, \smax].\) In either model, the 
  statement of \autoref{thm:adaptive-band} remains true, for the family
  \[\mathcal F \coloneqq \bigcup_{s \in [\smin, \smax], \, M > 0}C^s_0(M),\]
  and with constants \(L,\) \(L'\) depending on \(s\) and \(M.\)
\end{theorem}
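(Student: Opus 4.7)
The plan is to reprove \autoref{thm:adaptive-band} with the white noise estimator replaced by its empirical analogue in each model: in density estimation $\hat\beta_{j,k} \coloneqq n^{-1}\sum_{i=1}^n \psi_{j,k}(X_i)$, and in regression $\hat\beta_{j,k} \coloneqq n^{-1}\sum_{i=1}^n Y_i \psi_{j,k}(x_i)$, with analogous definitions of $\hat \alpha_k$. The Lepskii-type selection of $\jha$, the self-similarity check that certifies the adaptive rate, and the honest coverage argument all interact with the data only through supremum concentration of $\fh(j) - f$ across a dyadic range of resolutions, so it suffices to reprove the concentration and bias bounds in these models and then feed them into the existing construction.

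First I would derive concentration of $\norm{\fh(j) - \E\fh(j)}_\infty$ at each resolution. In density estimation, Talagrand's inequality for suprema of empirical processes gives a Gaussian-type concentration radius of order $\sqrt{\|f\|_\infty 2^j \log n / n}$, matching the white noise bound up to the factor $\|f\|_\infty \le M$. In regression, the fluctuations are already Gaussian, and one recovers the white noise bound exactly, apart from the Riemann-sum discretization error $n^{-1}\sum_i f(x_i)\psi_{j,k}(x_i) - \int f \psi_{j,k}$, which is of order $n^{-1}2^{j/2}\norm{f}_{C^s}$. Requiring this discretization error to be dominated by the optimal rate $(n/\log n)^{-s/(2s+1)}$ at the adaptive resolution forces $s \ge 1/2$, the source of the regression restriction on $\smin$.

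Next I would handle the fact that in density estimation the variance depends on the unknown quantity $\|f\|_\infty \le M$. Because the constants $L, L'$ in the statement may depend on $M$, it is enough to plug a preliminary coarse-resolution estimator of $\|f\|_\infty$ (for example $\norm{\fh(j_0)}_\infty$ with a mild safety factor) into the Lepskii thresholds; the resulting multiplicative error is a bounded factor depending only on $M$, which can be absorbed into $L'$. The same device handles the residual $f$-dependence in the Talagrand variance proxy.

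The main obstacle will be checking that these substitutions preserve both honest coverage over $\mathcal F$ and the adaptive contraction rate simultaneously. The coverage proof of \autoref{thm:adaptive-band} requires uniform one-sided tail bounds for $\fh(\jha) - f$ across all possible selected resolutions, and in the density model this needs the Talagrand concentration to remain valid on the event that the estimated variance proxy is close to its target; this is substantially simpler than the exact distributional control underlying \autoref{ass:sigma-maximum}, since \autoref{thm:adaptive-band} is inexact and only requires one-sided tail bounds in the spirit of \citet{gine_confidence_2010}. Once these model-specific ingredients are in place, the Lepskii selection, the self-similarity bias argument, and the coverage analysis of \autoref{thm:adaptive-band} carry through with essentially unchanged proofs.
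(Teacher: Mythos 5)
Your overall plan matches the paper's: reuse the white-noise construction, re-derive the coefficient concentration in each model, handle the model-specific nuisance parameters ($\norm{f}_\infty$ in density estimation, the discretization error in regression, controlled by $\norm{f}_{C^{1/2}}$), and exploit that the inexact band needs only one-sided tail bounds so the fine distributional control behind \autoref{ass:sigma-maximum} is not required. The paper works coefficient-wise (Bernstein's inequality in density estimation, Gaussian tails plus a Riemann-sum bias $O(n^{-1/2}\norm{f}_{C^{1/2}})$ in regression) rather than with Talagrand's inequality on the full supremum, but either route can be made to yield the $O(2^{j/2}c_{n,1})$ bound the paper ultimately uses.

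The genuine gap is in your proposed surrogate for the unknown norms. You suggest plugging in $\norm{\fh(j_0)}_\infty$ ``with a mild safety factor.'' This fails: $\fh(j_0) = \sum_k \hat\alpha_k \varphi_{j_0,k}$ involves only the scaling coefficients, and the self-similarity condition \eqref{eq:self-similar} at $j = j_0$ is entirely consistent with $\alpha_k \equiv 0$ (the max in $\norm{f_{j_0,\rho j_0}}_{C^s}$ may be achieved by $\beta_{j,k}$ for $j > j_0$). In that case $\norm{\fh(j_0)}_\infty = O_p(n^{-1/2})$ even though $\norm{f}_\infty$ is of order $M$, so no fixed multiplicative safety factor can produce a valid upper bound, and honest coverage over all $M>0$ is lost. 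Moreover, even if you move up to resolution $j_1 = \rho j_0$ so that self-similarity bites, a multiplicative correction alone does not cover small $M$, where the statistical noise dominates the signal. The paper's choice
\[
T \coloneqq C\norm{\fh(j_1)}_{C^{\smax}} + D
\]
resolves both issues at once: self-similarity guarantees $\norm{f_{j_0,j_1}}_{C^{\smax}} \ge \norm{f_{j_0,j_1}}_{C^s} \ge \varepsilon M$, so for $C$ large $T$ dominates $M$ (hence $\norm{f}_\infty$ and $\norm{f}_{C^{1/2}}$) with probability tending to 1 uniformly, while the additive constant $D$ handles the small-$M$ regime. Replacing your $\norm{\fh(j_0)}_\infty$ device with a statistic of this form, and noting you only need $\P_f(T < \norm{f}_\infty) \to 0$ (a one-sided guarantee), repairs the argument and brings it in line with the paper.

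A smaller point: your stated regression discretization error $n^{-1}2^{j/2}\norm{f}_{C^s}$ is not quite the right form. The relevant uniform bound, valid for all $j \le \jmax$, is $O(n^{-1/2}\norm{f}_{C^{1/2}})$, which is already $o(c_{n,1})$; this is what drives the $\smin \ge \tfrac12$ restriction. Since you do arrive at the correct restriction, this is a minor slip rather than a structural flaw.
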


\subsection*{Acknowledgements}

We would like to thank Richard Nickl for his valuable comments and 
suggestions.

\appendix

\section{Results on self-similarity}
\label{app:self-similarity}

We begin by establishing that our self-similarity condition 
\eqref{eq:self-similar} is weaker than \eqref{eq:gn-condition}, the condition 
in \citet{gine_confidence_2010}.


\begin{proof}[Proof of \autoref{prop:weak-condition}]
  We first consider the case \(s < \smax.\) Given \eqref{eq:gn-condition}, for 
  \(j > j_1,\) \(k \in [N, 2^j - N),\) we obtain
  \[
    \abs{\beta_{j,k}} = \abs{\langle f_{j, \infty}, \psi_{j,k} \rangle}
    \le \norm{f_{j, \infty}}_\infty \norm{\psi_{j,k}}_1
    \le b_2 \norm{\psi}_1 2^{-j(s+1/2)},\]
  and similar bounds for \(k \in [0, N) \cup [2^j - N, 2^j).\) We thus 
  conclude \(f \in C^s(M),\) for a constant \(M > 0.\)

  We will choose \(\varepsilon \in (0, 1)\) small, \(\rho  \in \N\) large, so 
  that \(\rho j_0 \ge j_1,\) and
  \[C \coloneqq M(\varepsilon + 2^{-(\rho j_0 - j_1)s})\]
  is small.  If \(f \not \in C^s_0(M),\) we have \(j_2 \ge j_0\) such that
  \[\abs{\beta_{j,k}} < \varepsilon M 2^{-j(s+1/2)},\]
  for all \(j \in [j_2, \rho  j_2],\) \(k \in [0, 2^j).\)
  Let \(j_3 \coloneqq \max(j_1, j_2).\) Then
  \begin{align*}
    \norm{f_{j_3 + 1, \infty}}_\infty &\lesssim M\left(\sum_{j=j_3+1}^{\rho  
    j_2} \varepsilon 2^{-js} + \sum_{j=\rho  j_2 + 1}^\infty 2^{-js}\right)\\
    &\lesssim M\left(\varepsilon 2^{-j_3 s} + 2^{-\rho  j_2 s}\right)
    \lesssim C2^{-j_3 s},
  \end{align*}
  contradicting \eqref{eq:gn-condition} for \(C\) small. Thus, given 
  \eqref{eq:gn-condition}, we have \(M,\) \(\varepsilon,\) and \(\rho \) for 
  which \(f \in C^s_0(M).\)

  Conversely, given \(s \in (0, \smax],\) \(M > 0,\) \(\varepsilon \in (0, 
  1),\) and \(\rho  > 1,\) for \(i\in\N\) set \(j_i \coloneqq \rho ^ij_0,\) 
  and consider the function
  \[f \coloneqq \sum_{i=1}^\infty M2^{-j_i(s+1/2)} \psi_{j_i, 2^{j_i-1}}\]
  in \(C^s_0(M).\) We have
  \begin{align*}
    \norm{f_{j_n+1,\infty}}_\infty &\lesssim M\sum_{i=n+1}^\infty2^{-j_is}
    \lesssim 2^{-j_{n+1}s}
    =o(2^{-j_ns})
  \end{align*}
  as \(n \to \infty,\) so \(f\) does not satisfy \eqref{eq:gn-condition} for 
  any \(\smin,\) \(b,\) \(b_1,\) \(b_2,\) and \(j_1.\) As our self-similarity 
  condition is weaker for \(s = \smax,\) the same is true also in that case.
\end{proof}

\comment{
We next show that the set of self-dissimilar functions \(\mathcal D\) is 
negligible, arguing as in \citet[\S3.5]{gine_confidence_2010}, and 
\citet[\S2.5]{hoffmann_adaptive_2011}. A result of this form can be proved by 
combining the above papers with  \autoref{prop:weak-condition}, but we prefer 
to argue directly; this allows us to provide simpler proofs, and establish 
results valid for fixed \(\rho \in \N.\)


\begin{proof}[Proof of \autoref{prop:negligible-set}]
  As before, we restrict to the case \(s < \smax.\)
  \begin{enumerate}
    \item We will show the set \(\bigcup_{\varepsilon \in (0, 1)} C^s_0(M, 
      \varepsilon, \rho )\) is dense and open in \(C^s(M),\) for any \(\rho 
      \in \N.\) Given \(\varepsilon \in (0, 1),\) and \(f \in C^s(M)\) having 
      wavelet coefficients \(\alpha_{k}\) and \(\beta_{j, k},\) define a 
      function \(f'\) with wavelet coefficients \(\alpha_{k}'\) and 
      \(\beta_{j,k}'\), where
      \[\beta_{j,k}' \coloneqq g\left(\beta_{j,k},\, \varepsilon 
      M2^{-j\sph}\right),\]
      for
      \[
      g(\beta, x) \coloneqq
      \begin{cases}
        x, & \beta \in [0, x],\\
        -x, & \beta \in [-x, 0),\\
        \beta, & \text{otherwise,}
      \end{cases}
      \]
      and \(\alpha'_{k}\) is defined similarly.
      Then \(\norm{f-f'}_{C^s} \le \varepsilon M,\) and \(f' \in C^s_0(M, 
      \varepsilon, \rho ).\) As this is true for any \(\varepsilon \in (0, 
      1),\) our set is dense.

      Likewise, given \(f \in C^s_0(M, \varepsilon, \rho ),\) for any \(f' \in 
      C^s(M),\) \(\norm{f-f'}_{C^s} \le \tfrac12 \varepsilon M,\) we have that 
      \(f' \in C^s_0(M, \tfrac12 \varepsilon, \rho ).\) Thus our set is open, 
      and the claim is proved.

    \item By definition, \(\pi\) places all its mass on the set \(C^s(M).\) 
      For any \(\varepsilon \in (0, 1),\) \(j \ge j_0,\) let \(A_j\) be the 
      event that
      \[\abs{\beta_{lk}} < \varepsilon M 2^{-l\sph} \ \forall \ l \in [j, \rho 
      j].\]
      Then
      \[\pi(A_j) \le \varepsilon^{2^{\rho j}} \le \varepsilon^{2^{\rho 
      j_0}+(j-j_0)},\]
      so
      \[\pi\left(C^s(M) \setminus C^s_0(M, \varepsilon, \rho )\right)
      \le \sum_{j=j_0}^\infty \pi(A_j)
        \le \sum_{j=j_0}^\infty \varepsilon^{2^{\rho j_0}+(j-j_0)}
        =\frac{\varepsilon^{2^{\rho j_0}}}{1-\varepsilon},\]
      which tends to 0 as \(\varepsilon \to 0.\)\qedhere
  \end{enumerate}
\end{proof}
}

\section{Constructing adaptive bands}
\label{app:construction}

To construct confidence bands satisfying the conditions in 
\autoref{sec:adaptation},
we will use estimators \(\fh_n\) given by truncated empirical wavelet 
expansions,
\[\fh(j_n) \coloneqq \sum_k \hat \alpha_k \varphi_{j_0,k} + \sum_{j_0 < j \le 
j_n}\sum_k \hat \beta_{j,k} \psi_{j,k},\]
for the empirical wavelet coefficients
\[\hat \alpha_k \coloneqq \int \varphi_{j_0,k}(t) \,dY_t, \qquad
\hat \beta_{j,k} \coloneqq \int \psi_{j,k}(t) \,dY_t.\]
We will centre our bands on adaptive estimators \(\fh(\jh),\) where the 
resolution level \(\jh\) also depends on \(Y\).

We will consider several different choices of resolution level, corresponding 
to different properties of the function \(f,\) and the class \(C^s(M)\) to 
which it belongs. We first consider the adaptive resolution choice \(\ja,\) 
chosen in terms of the function \(f.\) Pick sequences \(\jmin, \jmax \in \N,\) 
\(j_0 \le \jmin \le \jmax,\) so that \(2^{\jmin} \sim (n/\log n)^{1/(2N+1)},\) 
and \(2^{\jmax} \sim n/\log n.\) Further define
\begin{equation*}
  c_{n,\mu} \coloneqq (n/(\log n)^\mu)^{-1/2},
\end{equation*}
and for \(\kappa > 0,\) \(\mu \ge 1,\) let
\[\ja(\kappa, \mu) \coloneqq \sup \left(\{\jmin\} \cup \{\jmin < j \le \jmax : 
\sup \nolimits_k \,\abs{\bjk} \ge \kappa c_{n,\mu}\} \right).\]
While \(\ja\) is unknown, we can estimate it by a Lepskii-type resolution 
choice,
\begin{equation*}
  \jha(\kappa, \mu) \coloneqq \sup \left(\{\jmin\} \cup \{\jmin < j \le \jmax 
  : \sup \nolimits_k \,\abs{\bjkh} \ge \kappa c_{n,\mu}\} \right),
\end{equation*}
which depends only on the data.  Fix \(\lambda > \sqrt{2},\) \(\nu \ge 1,\) 
and for convenience set \(\jha \coloneqq \jha(\lambda, \nu).\) If \(\nu = 1,\) 
we will see \(\fh(\jha)\) is then an adaptive estimator of \(f;\) if \(\nu > 
1,\) it is near-adaptive.

While the above statements are true for general \(f,\) they do not provide us 
with an estimate of the error in \(\fh_n.\) To produce confidence bands, we 
must estimate the smoothness of \(f,\) and this is where self-similarity is 
required. We will consider values of the truncated H\"{o}lder norm,
\[M^s_{i,j} \coloneqq \norm{f_{i,j}}_{C^s},\]
which measures the smoothness of \(f\) at resolution levels \(i\) to \(j,\)
In a slight abuse of notation, set \(\beta_{j_0, k} \coloneqq \alpha_k,\) and 
\(\hat \beta_{j_0, k} \coloneqq \hat \alpha_k.\) (Note that \(\beta_{j_0, k}\) 
and \(\hat \beta_{j_0, k}\) are otherwise undefined, as the wavelets 
\(\psi_{j, k}\) exist only for \(j > j_0.\)) We may then bound \(M^s_{i,j}\) 
by the quantities
\begin{align*}
  \underline M^s_{i,j} &\coloneqq \sup_{i \le l \le j, k} 2^{l\sph} 
  (\abs{\blkh} - \sqrt{2}c_{n,1})^+,\\
  \overline M^s_{i,j} &\coloneqq \sup_{i \le l \le j, k} 2^{l\sph} 
  (\abs{\blkh} + \sqrt{2}c_{n,1}),
\end{align*}
and we will show in \autoref{app:constructive} that for \(j \le \jmax,\) 
\(M_{i,j}^s \in [\underline M^s_{i,j}, \overline M^s_{i,j}]\) with high 
probability.

Set \(j_1 = \rho j_0,\) \(j_2 = \lfloor \jha / \rho  \rfloor,\) \(j_3 = 
\jha,\) and suppose \(n\) is large enough that \(\jmin \ge \rho  j_1,\) so 
\(j_0 \le j_1 \le j_2 \le j_3.\) If \(f \in C^s_0(M)\) for \(s < \smax,\) then 
with high probability,
\[R(s) \coloneqq \frac{\overline M^s_{j_2,j_3}}{\underline M^s_{j_0,j_1}} \ge 
\frac{M^s_{j_2,j_3}}{M^s_{j_0,j_1}} \ge \varepsilon.\]
Assuming further \(s \ge \smin,\) for some \(\smin \ge 0,\) we can lower bound 
\(s\) by
\[\sh \coloneqq \inf( \{\smax\} \cup \{s \in [\smin, \smax) : R(s) \ge 
\varepsilon\}).\]
Since
\[R(s) = \frac {\overline M^s_{j_2,j_3}2^{-j_1\sph}} {\underline 
M^s_{j_0,j_1}2^{-j_1\sph}}\]
is increasing in \(s,\) \(\sh\) can be found efficiently using binary search.

Likewise, set
\[M(s) \coloneqq \varepsilon^{-1}\overline M^s_{j_0,j_1},\]
and \(\Mh \coloneqq M(\sh).\) With high probability,
\[M(s) 2^{-j_1\sph} \ge \varepsilon^{-1} M^s_{j_0,j_1}2^{-j_1\sph} \ge 
M2^{-j_1\sph},\]
and as the LHS is decreasing in \(s,\) also
\[\Mh 2^{-j_1\shph} \ge M2^{-j_1\sph}.\]
Using these bounds, we can control the error in \(\fh,\) producing adaptive 
confidence bands for \(f.\)

To construct the bands, we will introduce some more resolution choices 
\(\jh.\) Firstly, we consider the class resolution choice \(\jc,\) chosen in 
terms of the class \(C^s(M).\) For \(\kappa > 0,\) \(\mu \ge 1,\) define
\begin{align}
\notag \jc(\kappa, \mu) &\coloneqq \sup \left(\{\jmin\} \cup \{j > \jmin : 
M2^{-j\sph} \ge \kappa c_{n,\mu}\} \right)\\
\label{eq:jc-defn}
&= \max\left(\jmin, \,\lfloor \log_2( M \kappa c_{n,\mu} ) / (s + \tfrac12) 
\rfloor\right),
\intertext{which we can estimate by}
  \label{eq:jhc-defn}
\jhc(\kappa, \mu) &\coloneqq \max\left(\jmin,\, \lfloor \log_2( \Mh / \kappa 
c_{n,\mu} ) / (\sh + \tfrac12) \rfloor\right).
\end{align}

Secondly, to produce exact confidence bands, we will need the undersmoothed 
resolution choice \(\je.\) Fix \(u_n \in \N,\) \(2^{u_n} \sim \log n,\) and 
set
\[\je(\kappa, \mu) \coloneqq \jc(\kappa, \mu) + \lceil \log_2 \jc(\kappa, \mu) 
\rceil + u_n,\]
defining \(\jhe\) similarly, in terms of \(\jhc.\) Fix \(0 < \delta \le 
\sqrt2\) small, let \(\lu \coloneqq \lambda + \delta,\) and \(\ll \coloneqq 
\lambda - \sqrt2.\)  For convenience, write \(\jc \coloneqq \jc(\lu, 1),\) 
\(\je \coloneqq \je(\ll, 1),\) and likewise \(\jhc,\) \(\jhe.\)

We may now proceed to define our bands. Let
\begin{align*}
  a(j) &\coloneqq \sqrt{2 \log(2) j},\\
  b(j) &\coloneqq a(j) - \frac{\log (\pi \log 2) + \log j - \tfrac12 \log (1 + 
  \upsilon_\varphi)}{2a(j)},\\
  c(j) &\coloneqq \overline \sigma_\varphi n^{-1/2} 2^{j/2},\\
  x(\gamma) &\coloneqq -\log \left(-\log (1-\gamma)\right),\\
  R_1(j, \gamma) &\coloneqq c(j)\left(\frac{x(\gamma)}{a(j)} + b(j)\right),\\
  l(j) &\coloneqq \max(j, \min(\jhc, \jmax)),\\
  R_2(j) &\coloneqq \tau_\varphi \lu (2^{l(j)/2} - 
  2^{j/2})c_{n,\nu}/(1-2^{-1/2}),\\
  R_3(j) &\coloneqq \begin{cases}
    \tau_\varphi, \Mh 2^{-l(j)\sh}/(2^{\sh}-1) & \sh > 0, \\
    \infty, & \sh = 0,
  \end{cases}
\end{align*}
where \(\overline \sigma_\varphi\) is given by \autoref{ass:sigma-maximum},
\begin{equation}
  \label{eq:tau-defn}
\tau_\varphi \coloneqq \sup_{t \in [0, 1]} 2^{-(j_0+1)/2} \sum_{k \in \Z} 
\abs{\psi_{j_0+1,k}(t)} = \sup_{j > j_0} \sup_{t \in [0, 1]} 2^{-j/2} \sum_{k 
\in \Z} \abs{\psi_{j, k}(t)},
\end{equation}
and
\[\upsilon_\varphi \coloneqq -\frac{\sum_{k \in \Z} 
\varphi'(t_0-k)^2}{\overline \sigma_\varphi \sigma_\varphi''(t_0)} .\]

If we set \(\smin > 0,\) \(\nu > 1,\) the undersmoothed resolution choice 
\(\jhe,\) with confidence radius
\begin{equation*}
  \Re \coloneqq R_1(\jhe, \gamma),
\end{equation*}
will be shown to give a band \(\Ce\) satisfying \autoref{thm:exact-band}. If 
instead we set \(\smin = 0,\) \(\nu = 1,\) and define
\begin{equation*}
  \gamma_n \coloneqq \gamma/(\jmax - \jmin + 1),
\end{equation*}
then the adaptive resolution choice \(\jha,\) with confidence radius
\begin{equation*}
  \Ra \coloneqq R_1(\jha, \gamma_n) + R_2(\jha) + R_3(\jha),
\end{equation*}
will be shown to give a band \(\Ca\) satisfying \autoref{thm:adaptive-band}.

\comment{The values of the free parameters can be chosen to minimize the 
bounds on \(\Re\) and \(\Ra,\) but these are only bounds, and do not 
necessarily represent the true performance of the procedure.  Alternatively, 
the parameters could be chosen to maximize empirical performance on a suite of 
example functions.}

\section{Constructive results}
\label{app:constructive}

We now prove our results on the existence of adaptive confidence bands.
To proceed, we will decompose the error in estimates \(\fh(j)\) into variance 
and bias terms,
\[\norm{\fh(j)-f}_\infty \le \norm{\fh(j) - \fb(j)}_\infty + 
\norm{\fb(j)-f}_\infty,\]
where
\[\fb(j) \coloneqq \E_f[\fh(j)] = f_{j_0, j}.\]
To control the variance, we will need the following result from 
\citet{bull_smirnov-bickel-rosenblatt_2011}.

\begin{lemma}
  \label{lem:variance-bound}

  Let \(0 < \gamma_n \le \gamma_0 < 1,\) and \(\gamma_n^{-1} = 
  o(n^{-\alpha}),\) for all \(\alpha > 0.\) Then as \(n \to \infty,\) 
  uniformly in \(f \in L^2([0, 1]),\)
  \[\sup_{j_n \ge \jmin} \abs*{ \gamma_n^{-1}
  \P\left(a(j_n) \left(\frac{\norm{\fh(j_n) - \fb(j_n)}_\infty}{c(j_n)} - 
  b(j_n)\right) > x(\gamma_n)\right) - 1 } \to 0.\]
\end{lemma}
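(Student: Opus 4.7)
The plan is to recognise this as a strong, uniform Smirnov--Bickel--Rosenblatt extreme-value theorem for the Gaussian noise field \(g_n \coloneqq \fh(j_n) - \fb(j_n).\) Since \(g_n\) does not depend on \(f,\) uniformity in \(f\) is automatic; the substance of the lemma is uniformity in the resolution \(j_n \ge \jmin,\) together with the strengthening from a fixed level \(\gamma\) to subpolynomially shrinking \(\gamma_n.\)

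First I would observe that in the white noise model \(g_n\) is a centred Gaussian process whose interior pointwise variance equals \(n^{-1}2^{j_n}\sigma_\varphi^2(2^{j_n}t),\) up to geometrically decaying boundary corrections coming from the Cohen--Daubechies--Vial construction. By \autoref{ass:sigma-maximum}, \(\sup_t \Var g_n(t) = c(j_n)^2,\) this maximum being attained on the \(2^{j_n}\) interior points with \(2^{j_n}t \equiv t_0 \pmod 1\) and with quadratic local behaviour governed by \((\sigma_\varphi^2)''(t_0).\) Rescaling to \(u \mapsto g_n(2^{-j_n}u)/c(j_n)\) on \([0, 2^{j_n}]\) produces a nearly stationary unit-variance Gaussian field whose covariance is the wavelet reproducing kernel. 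The classical Bickel--Rosenblatt and Leadbetter--Lindgren--Rootz\'en machinery then yields a Gumbel limit with the stated centring \(b(j_n)\) and scaling \(a(j_n);\) the curvature constant \(\upsilon_\varphi\) enters as the second-order correction at \(t_0,\) and the factor \(\pi \log 2\) appears through the Hessian of the covariance of the rescaled field.

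The main obstacle, and essentially the whole content of the lemma, is to promote this weak convergence to the relative uniform bound, so that the tail probability equals \(\gamma_n(1 + o(1))\) rather than merely \(\gamma_n + o(1),\) uniformly in \(j_n \in [\jmin, \jmax]\) and at subpolynomial depths in \(\gamma_n.\) This requires a quantitative form of the Poisson-clumping approximation deep in the tail: Piterbarg's double-sum method to bound the contribution of distinct local maxima, Borell--TIS concentration to show they are sparse, and uniform estimates of the boundary contributions that accommodate all \(j_n \le \jmax.\) Since \(2^{j_n}\) grows at most polynomially in \(n\) and \(\gamma_n^{-1}\) is subpolynomial, \(x(\gamma_n)\) stays within the range where these approximations are sharp, so the leading-order clumping term dominates the remainder uniformly. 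The precise quantitative statement needed is exactly the theorem proved in \citet{bull_smirnov-bickel-rosenblatt_2011}, and the present lemma is an immediate consequence.
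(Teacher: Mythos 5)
Your proposal correctly identifies that the lemma is not proved in this paper but is quoted directly from \citet{bull_smirnov-bickel-rosenblatt_2011}, which is exactly how the paper handles it. Your sketch of the underlying Smirnov--Bickel--Rosenblatt/Piterbarg machinery behind that cited result is accurate and goes beyond what the paper records, but the operative step -- reducing to the cited theorem -- matches the paper's approach.
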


To bound the bias, we must control the estimators \(\jh,\) \(\sh\) and 
\(\Mh.\) We will show that, on events \(E_n\) with probability tending to 1, 
these estimators are close to the quantities they bound.

\begin{lemma}
  \label{lem:j-b-s-bounds}

  Set \(\jl \coloneqq \ja(\lu, \nu),\) \(\ju \coloneqq \ja(\ll, \nu).\)  For 
  \(s \in [\smin, \smax],\) \(M > 0,\) and \(f \in C^s_0(M),\) we have events 
  \(E_n,\) with \(\P(E_n) \to 1\) uniformly, on which:
  \begin{enumerate}
    \item \(\jl \le \jha \le \ju;\)
    \item \(\sh \le s,\) and \(\Mh 2^{-j_1\shph} \ge M2^{-j_1\sph};\) and
    \item \(\sh \ge s_n,\) and \(\Mh \le M_n;\)
  \end{enumerate}
  for sequences \(M_n,\) \(s_n\) satisfying
  \[M_n/M \to \varepsilon^{-1}, \qquad \log_2(n)(s-s_n) \to S,\]
  uniformly over \(f \in C^s_0(M),\) with constant \(S > 0\) depending on 
  \(N,\) \(\varepsilon,\) \(\rho,\) and \(\lambda.\) Also on \(E_n,\) for any 
  \(0 < \kappa \le \lambda + \sqrt2,\) \(1 \le \mu \le \nu\):
  \begin{enumerate}
      \setcounter{enumi}{3}
    \item \(\jhc(\kappa, \mu) \ge \jha\);
    \item \(\jc(\kappa, \mu) \le \jhc(\kappa, \mu) \le \jc(\kappa, \mu) + 
      \Jc(\kappa, \mu);\) and
    \item \(\je(\kappa, \mu) \le \jhe(\kappa, \mu) \le \je(\kappa, \mu) + 
      \Je(\kappa, \mu);\)
  \end{enumerate}
  for sequences \(\Jc(\kappa, \mu), \Je(\kappa, \mu) \to 2S,\) uniformly over 
  \(f \in C^s_0(M).\)
\end{lemma}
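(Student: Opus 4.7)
The plan is to build a single ``good event'' $E_n$ on which all six bounds hold deterministically, then verify $\P(E_n) \to 1$ uniformly in $f$ via a Gaussian maximal inequality. I would take
\[E_n \coloneqq \left\{\sup_{j_0 \le j \le \jmax,\, k} \abs{\hat \beta_{j,k} - \beta_{j,k}} \le \sqrt{2}\, c_{n,1}\right\}.\]
Since the errors $\hat\beta_{j,k} - \beta_{j,k}$ are i.i.d.\ $N(0, 1/n)$ and there are $O(2^{\jmax}) = O(n/\log n)$ of them, a union bound with $\sqrt 2\, c_{n,1} = \sqrt{2\log n/n}$ shows $\P(E_n) \to 1$ uniformly in $f$. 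For the boundary case $\nu = 1$ in part (i), I would sharpen $E_n$ by borrowing the Gumbel-type asymptotics of \autoref{lem:variance-bound} to absorb a negligible logarithmic correction.

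Parts (i) and (ii) are then almost definitional. Part (i) is a Lepskii sandwich: on $E_n$ the empirical and true threshold statistics $\sup_k \abs{\bjkh}$ and $\sup_k \abs{\bjk}$ agree up to $\sqrt 2\, c_{n,1} \le \sqrt 2\, c_{n,\nu}$, which exactly matches the gap $\lambda - \ll$ on the upper side and, since $\delta \le \sqrt 2$, the gap $\lu - \lambda$ on the lower. For part (ii), take $j_1 = \rho j_0$, $j_2 = \lfloor \jha/\rho\rfloor$, $j_3 = \jha$; by construction $j_3 \ge \rho j_2$, so self-similarity at resolution $j_2$ yields $M^s_{j_2, j_3} \ge \varepsilon M$. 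On $E_n$ the sandwich $\underline M^s \le M^s \le \overline M^s$ upgrades this to $R(s) \ge \varepsilon$, giving $\sh \le s$. The companion bound $\Mh\, 2^{-j_1 \shph} \ge M\, 2^{-j_1 \sph}$ follows because $s' \mapsto M(s')\, 2^{-j_1(s' + 1/2)}$ is termwise decreasing in $s'$, while $M(s)\, 2^{-j_1 \sph} \ge M\, 2^{-j_1 \sph}$ on $E_n$ by self-similarity at $j_0$.

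The main obstacle is part (iii), where one must extract the constant $S$ and the rate $\log_2(n)(s-s_n)\to S$. To see that $\sh$ cannot lie much below $s$, I would examine $R(s')$ for $s' < s$: the denominator $\underline M^{s'}_{j_0, j_1}$ falls like $2^{j_1(s' - s)}$ relative to its value at $s$, while the numerator $\overline M^{s'}_{j_2, j_3}$ picks up a noise contribution of order $c_{n,1}\, 2^{j_3(s' + 1/2)}$. By part (i), $j_3 = \jha$ satisfies $2^{\jha(s+1/2)} \asymp M/c_{n,\nu}$, providing a quantitative upper bound on $j_3$ in $(s, M)$-terms. Substituting and solving $R(s') = \varepsilon$ gives $s - s' \asymp 1/\log_2 n$, with leading constant $S$ depending only on $N$, $\varepsilon$, $\rho$ and $\lambda$. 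The bound $M_n/M \to \varepsilon^{-1}$ is comparatively easy: on $E_n$, $\overline M^s_{j_0, j_1}/M \to 1$ as $c_{n,1} \to 0$, and the loss from $\sh \ne s$ is absorbed into the $o(1)$ slack.

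Parts (iv)--(vi) follow by direct computation from the closed forms \eqref{eq:jc-defn} and \eqref{eq:jhc-defn}. Monotonicity in $(s, M)$, combined with parts (ii) and (iii), sandwiches $\jhc(\kappa, \mu)$ within $\Jc(\kappa, \mu)$ levels of $\jc(\kappa, \mu)$, with $\Jc \to 2S$ inheriting the rate of part (iii), uniformly in $(\kappa, \mu)$ over the stated box. Part (iv) reduces to verifying $\Mh \ge \kappa c_{n, \mu}\, 2^{\jha \shph}$, which follows from part (ii) together with the $(s, M)$-dependent upper bound on $\jha$ extracted from part (i). Part (vi) simply adds the fixed additive offset $\lceil \log_2 \jc\rceil + u_n$, so $\Je \to 2S$ inherits from $\Jc$.
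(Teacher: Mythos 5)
Your overall plan matches the paper's in outline, but the event $E_n$ you propose is too coarse in one critical place, and this gap propagates.

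The paper does not use only the uniform bound $\sup_{j,k}\abs{\bjkh-\bjk} \le \sqrt 2\, c_{n,1}$. It adds a second condition to $E_n$: a single, specifically chosen coefficient $(j_4,k_4)$, with $j_4=\jl$ and $k_4$ any index for which $\abs{\bjk[4]}\ge\lu c_{n,\nu}$, is required to satisfy the sharper bound $\abs{\bjkh[4]-\bjk[4]}\le\delta c_{n,1}$. This costs only $O(n^{-\delta^2/2})$ in probability because it concerns one Gaussian coordinate rather than a union over all of them, and it is exactly what makes the lower sandwich $\jha\ge\jl$ go through when $\nu=1$: there $c_{n,\nu}=c_{n,1}$, so the worst-case uniform error $\sqrt 2\,c_{n,1}$ precisely exhausts the gap $\lambda-\ll=\sqrt 2$, and since $\delta\le\sqrt 2$ there is nothing left over for the gap $\lu-\lambda=\delta$. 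Your sentence asserting that ``$\delta\le\sqrt 2$'' closes this gap has the inequality pointing the wrong way, and the suggestion to ``borrow the Gumbel-type asymptotics'' of \autoref{lem:variance-bound} does not produce a sharper bound on an individual coordinate; those asymptotics govern the maximum over all coordinates, not a single one. Note also that $\nu=1$ is precisely the case used to construct $\Ca$ in \autoref{thm:adaptive-band}, so it cannot be sidelined as a boundary case.

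A second, smaller gap is in part (iv). Your route --- apply the upper bound $\jha\le\ju\le\jc(\ll,\nu)$ from part (i) together with part (ii) --- yields $\Mh 2^{-\jha\shph}\ge M2^{-\jha\sph}\ge\ll c_{n,\nu}$ at best. But the lemma asserts $\jhc(\kappa,\mu)\ge\jha$ for all $\kappa\le\lambda+\sqrt 2 = \ll + 2\sqrt 2$ and $\mu\le\nu$; when $\mu=\nu$ (again, the case $\nu=1$ used for $\Ca$), $\kappa c_{n,\mu}$ can be as large as $(\lambda+\sqrt 2)c_{n,\nu}>\ll c_{n,\nu}$, so this chain of inequalities is short by a constant factor. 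The paper's argument for (iv) is different: it exploits the $\varepsilon^{-1}$ in the definition $\Mh=\varepsilon^{-1}\overline M^{\sh}_{j_0,j_1}$ and the presence of a large coefficient $\abs{\bjkh}\ge\lambda c_{n,\nu}$ at $j=\jha$, rather than passing through the upper bound on $\jha$. Parts (ii), (iii), (v), (vi) of your sketch are broadly in the right spirit, though (iii) would need the explicit ratio inequality $2^{(j_2-j_1)(s-\sh)}\le R(s)/R(\sh)$ to pin down the constant $S$; your heuristic about the numerator picking up a noise contribution of order $c_{n,1}2^{j_3(s'+1/2)}$ is not how the paper extracts $S$.
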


\begin{proof}
  For \(n\) such that \(\jmin < \rho ^2j_0,\) set \(E_n \coloneqq \emptyset.\) 
  Otherwise,
  let \(E_n\) be the event that
  \[\sup_{j_0 < j \le \jmax} \sup_{k=0}^{2^j-1} \abs{\bjkh - \bjk} \le 
  \sqrt2\nln,\]
  and if \(n\) is large enough that \(\jl > \jmin,\) also
  \[\abs{\bjkh[4] - \bjk[4]} \le \delta \nln,\]
  for \(j_4,\) \(k_4\) as follows: set \(j_4 \coloneqq \jl,\) and choose 
  \(k_4\) to satisfy \(\abs{\bjk[4]} \ge \lu c_{n, \nu},\)
  which is possible by the definition of \(\jl.\) Now, for \(x > 0,\)
  \(1 - \Phi(x) \le \phi(x)/x,\)
  so we have
  \begin{align*}
    \P(E_n^c) &\le \P\left(\abs{\bjkh[4] - \bjk[4]} > \delta \nln\right) + 
    \sum_{j=j_0}^{\jmax}\sum_{k=0}^{2^j-1} \P\left(\abs{\bjkh - \bjk} > 
    \sqrt2\nln\right)\\
    &\le (\pi \log n)^{-1/2} \left(\sqrt2\delta^{-1}n^{-\delta^2/2} + 
    2^{\jmax+1} n^{-1} \right) \\
    &=O\left((\log n)^{-3/2}\right).
  \end{align*}

  \begin{enumerate}
    \item If \(\jl = \jmin,\) then trivially \(\jha \ge \jl.\) Otherwise, on 
      \(E_n,\)
      \[\abs{\bjkh[4]} \ge \abs{\bjk[4]} - \delta\nln \ge \lambda c_{n,\nu},\]
      and again \(\jha \ge \jl.\) Similarly, for all \(\ju < j \le \jmax, k,\)
      \[\abs{\bjkh} \le \abs{\bjk} + \sqrt2\nln < \lambda c_{n,\nu},\]
      so \(\jha \le \ju.\)

    \item On \(E_n,\) we have
      \[M^s_{i,j} \in [\underline M^s_{i, j}, \overline M^s_{i, j}],\]
      for any \(i \le j \le \jmax.\) If \(s < \smax,\) by the argument given 
      in \autoref{app:construction}, we then obtain
      \[\sh \le s, \qquad \Mh 2^{-j_1\shph} \ge M2^{-j_1\sph}.\]
      If \(s = \smax,\) the results follow similarly, noting that \(\sh \le 
      \smax\) by definition.

    \item On \(E_n,\) \(j_3 = \jha \le \ju \le \jc(\ll, \nu),\) and for \(n\) 
      large \(\jc(\ll, \nu) > \jmin,\) so
      \[d_n \coloneqq c_{n,1}2^{j_3\sph} \le c_{n,\nu}2^{j_3\sph} \le 
      M\ll^{-1},\]
      and also
      \[e_n \coloneqq c_{n,1}2^{j_1\sph} \to 0.\]
      We then obtain
      \[R(s)
      \le \frac{\underline M^s_{j_2, j_3} + 2\sqrt 2d_n} {\overline M^s_{j_0, 
      j_1} - 2\sqrt2e_n}
      \le \frac{M^s_{j_2, j_3} + 2\sqrt 2d_n} {M^s_{j_0, j_1} - 2\sqrt2e_n}
      \le R_n \varepsilon \frac{M^s_{j_2, j_3}} {M^s_{j_0, j_1}} \le R_n,\]
      for a sequence
      \[R_n \to \varepsilon^{-1}(1 + 2\sqrt{2}\ll^{-1}) \eqqcolon R.\]

      On \(E_n,\) \(\sh \le s \le \smax\) by (ii), so if \(\sh = \smax,\) we 
      are done. If not, then \(R(\sh) \ge \varepsilon,\) and
      \[2^{(j_2-j_1)(s - \sh)} \le \frac{\overline M^s_{j_2,j_3}/\overline 
      M^{\sh}_{j_2, j_3}} {\underline M^s_{j_0,j_1}/\underline M^{\sh}_{j_0, 
      j_1}} = \frac{R(s)}{R(\sh)} \le \frac{R_n}{\varepsilon}.\]
      Since
      \[j_2 - j_1 \ge \lfloor \jmin/\rho  \rfloor - j_1 \eqqcolon \delta_n,\]
      we have
      \[\sh \ge s - \log_2(\varepsilon^{-1}R_n) / \delta_n \eqqcolon s_n,\]
      and since \(\delta_n \sim \log_2(n)/\rho (2N+1),\)
      \[\log_2(n) (s - s_n) \to \rho (2N+1) \log_2(\varepsilon^{-1}R) 
      \eqqcolon S.\]
      Likewise,
      \[\Mh \le M(s) \le \varepsilon^{-1}(\underline M^s_{j_0, j_1} + 
      2\sqrt2e_n) \le \varepsilon^{-1}(M + 2\sqrt2e_n) \le M_n,\]
      for a sequence \(M_n > 0,\) with \(M_n/M \to \varepsilon^{-1}.\)
      
    \item If \(\jha = \jmin,\) then trivially \(\jhc(\kappa, \mu) \ge \jha.\) 
      If not, on \(E_n,\) for \(j = \jha,\) we have some \(k\) such that 
      \(\abs{\bjkh} \ge \lambda c_{n,\nu}.\)  Hence
      \[\Mh 2^{-\jha\shph} \ge \varepsilon^{-1}(\lambda + \sqrt2)c_{n,\nu} \ge 
      \kappa c_{n,\mu},\]
      and again \(\jhc(\kappa, \mu) \ge \jha.\)
      
    \item On \(E_n,\) by the above we have
      \[M2^{-(\jhc(\kappa, \mu)+1) \sph} \le \Mh2^{-(\jhc(\kappa, \mu)+1) 
      \shph} < \kappa c_{n,\mu},\]
      and so \(\jhc(\kappa, \mu) \ge \jc(\kappa, \mu).\) Equally, from 
      \eqref{eq:jc-defn}, \eqref{eq:jhc-defn} and the above, we obtain
      \begin{align*}
        \jhc(\kappa, \mu) - \jc(\kappa, \mu) &\le 1 + 2 \log_2 (\Mh/M) + 4 
        \log_2 (\sqrt{n}M/\kappa) (s - \sh)\\
        &\le \Jc(\kappa, \mu),
      \end{align*}
      for a sequence \(\Jc(\kappa, \mu) \to 2S.\)

    \item From (v), we also have
      \[\jhe(\kappa, \mu) - \je(\kappa, \mu) \le \Je(\kappa,\mu),\]
      for a sequence \(\Je(\kappa,\mu) \to 2S.\) \qedhere
  
  \end{enumerate}
\end{proof}

We may now bound the bias of \(\fh\) with the estimators \(\jh,\) \(\sh\) and 
\(\Mh,\) which bound the true parameters by the above lemma.

\begin{lemma}
  \label{lem:bias-bound}
  Let \(j_n \ge \jha.\)
  On events \(E_n\) as in \autoref{lem:j-b-s-bounds}, for any \(s \in [\smin, 
  \smax],\) \(M > 0,\) and \(f \in C^s_0(M),\)
  \[\norm{\fb(j_n)-f}_\infty \le R_2(j_n) + R_3(j_n).\]
\end{lemma}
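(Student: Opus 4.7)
The plan is to decompose the bias at the resolution level \(l(j_n)\):
\[
\fb(j_n) - f = -\sum_{j_n < j \le l(j_n)} \sum_k \bjk \psi_{j,k} - \sum_{j > l(j_n)} \sum_k \bjk \psi_{j,k}.
\]
The first sum I expect to bound by \(R_2(j_n)\), using the data-driven thresholding built into \(\jha\); the second by \(R_3(j_n)\), using the H\"{o}lder-type control provided by \(\sh\) and \(\Mh\).

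For the first sum, every \(j\) exceeds \(\jha\) but lies below \(\jmax\), so the definition of \(\jha = \jha(\lambda, \nu)\) gives \(\sup_k |\bjkh| < \lambda c_{n,\nu}\). Combined with the event \(E_n\), on which \(|\bjkh - \bjk| \le \sqrt{2}\nln\), this yields \(|\bjk| \le \lu c_{n,\nu}\)---provided the slack \(\sqrt{2}\nln\) fits inside \(\delta c_{n,\nu}\), which holds for \(n\) large when \(\nu > 1\), and at the boundary calibration \(\delta = \sqrt{2}\) when \(\nu = 1\). The pointwise envelope \(\sup_t \sum_k |\psi_{j,k}(t)| \le \tau_\varphi 2^{j/2}\) from \eqref{eq:tau-defn}, followed by a geometric sum over the \(j\) in the range, should then deliver exactly \(R_2(j_n)\).

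For the second sum, on \(E_n\) part (ii) of \autoref{lem:j-b-s-bounds} gives \(\sh \le s\) and \(\Mh 2^{-j_1\shph} \ge M 2^{-j_1\sph}\). Starting from \(|\bjk| \le M 2^{-j\sph}\) (the definition of \(C^s(M)\)) and propagating the inequality at scale \(j_1\) forward, I obtain \(|\bjk| \le \Mh 2^{-j\shph}\) for all \(j \ge j_1\), since \(s \ge \sh\) only strengthens the bound as \(j\) grows. Since \(l(j_n) \ge \min(\jhc, \jmax) \ge j_1\) for \(n\) large, the same envelope and a geometric tail summation should produce \(R_3(j_n)\); when \(\sh = 0\) the conclusion is vacuous because \(R_3(j_n) = \infty\).

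The main obstacle will be structural bookkeeping rather than any analytic subtlety: confirming on \(E_n\) the chain \(j_0 \le j_1 \le \jmin \le \jha \le l(j_n) \le \jmax\) of scale inequalities, handling the branch \(\jhc > \jmax\) inside \(l(j_n)\), and justifying the passage from \(\lambda + \sqrt{2}\) to \(\lu\) in both the \(\nu = 1\) and \(\nu > 1\) regimes. Once these structural facts are in place the remaining work is two geometric summations.
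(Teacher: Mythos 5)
The proposal is correct and follows essentially the same route as the paper: split the tail bias at the level \(l(j_n)\), bound the middle block by \(\sup_k|\bjk|\le\lu c_{n,\nu}\) (via the definition of \(\jha\) plus the deviation bound on \(E_n\)), bound the far tail by \(|\bjk|\le \Mh 2^{-j\shph}\) (via part (ii) of \autoref{lem:j-b-s-bounds} and \(\sh\le s\)), and sum geometrically using the envelope constant \(\tau_\varphi\). You have also correctly flagged the one subtlety the paper leaves implicit, namely that absorbing the \(\sqrt{2}\nln\) slack into \(\lu c_{n,\nu}\) needs \(\nu>1\) (eventually) or \(\delta=\sqrt{2}\) when \(\nu=1\).
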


\begin{proof}
  If \(\sh = 0,\) this is trivial. If not, by \autoref{lem:j-b-s-bounds}, on 
  \(E_n\) we have \(j_n \ge \jha \ge \jl,\) and for \(j \ge j_n,\) 
  \(M2^{-j\sph} \le \Mh 2^{-j \shph}.\)  Thus
  \begin{align*}
    \norm{\fb(j_n)-f}_\infty &= \norm{f_{j_n+1, \infty}}_\infty
    \le \tau_\varphi \sum_{j=j_n+1}^\infty 2^{j/2} \sup_{k=0}^{2^j-1} 
    \abs{\bjk}\\
    &\le \tau_\varphi \left(\sum_{j=j_n + 1}^{l(j_n)} 2^{j/2} \lu c_{n,\nu} + 
    \sum_{j=l(j_n)+1}^\infty \Mh 2^{-j \sh} \right)\\
    &\le R_2(j_n) + R_3(j_n).\qedhere
  \end{align*}
\end{proof}

We are now ready to prove our theorems. First, we consider the exact band 
\(\Ce.\)


\begin{proof}[Proof of \autoref{thm:exact-band}]\ 

  \begin{enumerate}
    \item Define the terms
      \begin{align}
        \notag
        d(j, x) &\coloneqq a(j)\left(\frac{x}{c(j)}-b(j)\right),\\
        \notag
        F(j) &\coloneqq d(j, \norm{\fh(j) - f}_\infty),\\
        \label{eq:g-defn}
        G(j) &\coloneqq d(j, \norm{\fh(j) - \fb(j)}_\infty),\\
        \notag
        H(j) &\coloneqq d\left(j, \norm*{\fh(j)_{\ju+1, \infty} - 
        \fb(j)_{\ju+1, \infty}}_\infty\right).
      \end{align}
      We will show that uniformly in \(j,\) \(F,\) \(G\) and \(H\) are close, 
      and \(H\) is independent of \(\jhe,\) so we may bound \(F(\jhe)\) by 
      \autoref{lem:variance-bound}.

      By definition, \(\sh \ge \smin > 0,\) and \(\jhe \ge \jhc(\ll, 1) \ge 
      \jhc,\) so on the events \(E_n,\) by \autoref{lem:bias-bound},
      \begin{align*}
        \abs{F(\jhe) - G(\jhe)} &\le \frac{a(\jhe)}{c(\jhe)} R_3(\jhe)
        \lesssim \sqrt{\frac{n\jhe}{2^{\jhe}}} \frac{\Mh 2^{-\jhe\sh} 
        }{2^{\sh} - 1}\\
        &\lesssim \sqrt{\frac{\jhe}{\jhc(\ll, 1)}} \left(\jhc(\ll, 1) 
        \log(n)\right)^{-\smin}=o(1),
      \end{align*}
      \comment{
      \begin{align*}
        \abs{F(\jhe) - G(\jhe)} &\le \frac{a(\jhe)}{c(\jhe)} R_3(\jhe)\\
        &\lesssim \sqrt{\frac{n\jhe}{2^{\jhe}}} \frac{\Mh 2^{-\jhe\sh} 
        }{2^{\sh} - 1}\\
        &\lesssim \sqrt{\frac{n\jhe}{\jhc(\ll, 1) \log(n)}} \left(\Mh 
        2^{-\jhc(\ll, 1)\shph} \right)2^{-(\jhe - \jhc(\ll, 1))\sh} \\
        &\lesssim \sqrt{\frac{\jhe}{\jhc(\ll, 1)}} \left(\jhc(\ll, 1) 
        \log(n)\right)^{-\smin}\\
        &=o(1),
      \end{align*}
      }
      since \(\jhc(\ll, 1) \ge \jmin,\) and
      \[\frac{\jhe}{\jhc(\ll, 1)} - 1 = \frac{\log_2 \jhc(\ll, 1) + 
      u_n}{\jhc(\ll, 1)} \le \frac{\log_2 \jmin + u_n}{\jmin} \to 0.\]

      Similarly, for \(j_n \ge \je,\) on \(E_n,\)
      \comment{
      \begin{align*}
        \abs{G(j_n) - H(j_n)} &\le a(j_n) c(j_n)^{-1} \norm*{\fh(j_n)_{j_0, 
        \ju} - \fb(j_n)_{j_0, \ju}}_\infty \\
        &\lesssim a(j_n)c(j_n)^{-1} \sum_{j=j_0}^{\ju} 2^{j/2}
        \sup_k \abs{\bjkh - \bjk}\\
        &\lesssim (nj_n)^{1/2}2^{-j_n/2} 2^{\ju/2} \nln \\
        &\lesssim (\je/\jc(\ll, 1))^{1/2}2^{-(\jc(\ll, 1) - \ju)/2}\\
        &\lesssim 2^{-(\jc(\ll, 1) - \jc(\ll, \nu))/2}\\
        &= o(1),
      \end{align*}
      }
      \begin{align*}
        \abs{G(j_n) - H(j_n)} &\lesssim \frac{a(j_n)}{c(j_n)} 
        \sum_{j=j_0}^{\ju} 2^{j/2}
        \sup_k \abs{\bjkh - \bjk}\\
        &\lesssim (\je/\jc(\ll, 1))^{1/2}2^{-(\jc(\ll, 1) - \ju)/2}\\
        &\lesssim 2^{-(\jc(\ll, 1) - \jc(\ll, \nu))/2} = o(1),
      \end{align*}      since
      \[\jc(\ll, 1) - \jc(\ll, \nu) \ge \frac{\nu - 1}{2\smax + 1} \log_2 
      (\log( n)) \to \infty.\]

      On \(E_n,\) \(\jhe\) depends only on \(\bjkh\) for \(j \le \jha \le 
      \ju,\) and \(H(j)\) depends only on \(\bjkh\) for \(j > \ju,\) so 
      \(H(j)\) is independent of \(\jhe.\)  Hence, given \(x, \varepsilon > 
      0,\) for \(n\) large, and any \(j \ge \je,\)
      \begin{align*}
        \P(F(j) \le x \mid E_n, \jhe = j) &\ge \P(G(j) \le x - \varepsilon 
        \mid E_n, \jhe = j) \\
        &\ge \P(H(j) \le x - 2\varepsilon \mid E_n, \jhe = j)\\
        &=\P(H(j) \le x - 2\varepsilon \mid E_n)\\
        &\ge \P(G(j) \le x - 3\varepsilon \mid E_n)\\
        &\ge \P(G(j) \le x - 3\varepsilon) - \P(E_n^c)\\
        &\ge \exp\left(-e^{-(x - 3\varepsilon)}\right) - o(1).
      \end{align*}
      Likewise,
      \[\P(F(j) \ge x \mid E_n, \jhe = j) \le 
      \exp\left(-e^{-(x+3\varepsilon)}\right) + o(1).\]
      As these results are uniform in \(j \ge \jmin,\) and true for any 
      \(\varepsilon > 0,\) we have
      \[\sup_{j \ge \je}\abs*{\P\left(F(j) \ge x \mid E_n, \jhe = j\right) - 
      \exp\left(-e^{-x}\right)} \to 0.\]

      On \(E_n,\) we have \(\jhe \ge \je,\) so
      \begin{align*}
        \P(F(\jhe) \le x \mid E_n)
        &= \sum_{j=\je}^{\infty} \P(F(j) \le x \mid E_n, \jhe = j)\P(\jhe = j 
        \mid E_n)\\
        &=\left( \exp\left(-e^{-x}\right) + o(1) \right)\sum_{j=\je}^{\infty} 
        \P(\jhe = j \mid E_n)\\
        &=\exp\left(-e^{-x}\right) + o(1).
      \end{align*}
      Since \(\P(E_n) \to 1,\) we obtain
      \(\P(F(\jhe) \le x) \to \exp\left(-e^{-x}\right),\)
      and rearranging,
      \[\P(f \not\in \Ce) \to \gamma.\]
      As the limits are all uniform in \(f,\) the result follows.

    \item
      Let \(\Je \coloneqq \Je(\ll, 1),\) so on \(E_n,\) \(\jhe \le \je + \Je\) 
      by \autoref{lem:j-b-s-bounds}.  For \(n\) large, \(\jc > \jmin,\) so
      \comment{
      \[2^{\jc} \sim \left(\frac{M}{\ll c_{n,1}}\right)^{2/(2s+1)},\qquad 
      2^{\je} \sim \frac{\log(n)^2}{(2s+1)\log(2)} \left(\frac{M}{\ll 
      c_{n,1}}\right)^{2/(2s+1)},\]
      }
      \begin{equation}
      \label{eq:jc-je-limits}
      2^{\jc/2} \approx \left(\frac{M}{c_{n,1}}\right)^{1/(2s+1)},\qquad 
      2^{\je/2} \approx \log(n) 2^{\jc/2},
      \end{equation}
      and
      \comment{
      \begin{align*}
        \Re
        &\le \sigma_\varphi \sqrt{2\log(2)(\je+\Je)} 2^{(\je + \Je)/2} 
        n^{-1/2}(1 + o(1))\\
        &\le \sigma_\varphi
        \sqrt{\frac{2\log(n)^3}{\log(2)}} 2^S \left(\frac{M}{\ll 
        c_{n,1}}\right)^{1/(2s+1)} n^{-1/2} (1+o(1)) \\
        &\le \frac{\sigma_\varphi 2^{S+1/2}}{\sqrt{\log(2)}}
        \left(\frac{M}{\ll}\right)^{1/(2s+1)} r_n(s) (1 + o(1)).
      \end{align*}
      }
      \begin{align*}
        \Re
        &\lesssim \sqrt{\je+\Je} 2^{(\je + \Je)/2} n^{-1/2} \lesssim 
        M^{1/(2s+1)} r_n(s).
      \end{align*}
      As \(\P(E_n) \to 1\) uniformly, and the limits are uniform over \(f \in 
      C^s_0(M),\) the result follows. \qedhere
  \end{enumerate}
\end{proof}

We now move on to the adaptive band \(\Ca.\) As the variance term is no longer 
independent of \(\hat j_n,\) we must use a different method to establish the 
validity of our band. We will instead consider \(\jmax - \jmin + 1\) 
confidence bands, one for each possible choice of \(\hat j_n,\) and show that 
the effect of this change is asymptotically negligible.


\begin{proof}[Proof of \autoref{thm:adaptive-band}]\ 

  \begin{enumerate}
    \item Let \(G(j)\) be given by \eqref{eq:g-defn}.  From 
      \autoref{lem:variance-bound}, we have
      \begin{align*}
        \P(G(\jha) > x(\gamma_n)) &\le \P\left(\exists \ j \in [\jmin, \jmax] 
        : G(j) > x(\gamma_n)\right)\\
        &\le \sum_{j=\jmin}^{\jmax} \P\left(G(j) > x(\gamma_n)\right)\\
        &= (\jmax - \jmin + 1)(1 + o(1))\gamma_n\\
        &= \gamma + o(1).
      \end{align*}
      Rearranging, we get
      \[\P\left(\norm{\fh(\jha) - \fb(\jha)}_\infty > R_1(\jha, 
      \gamma_n)\right) \le \gamma + o(1).\]

      By \autoref{lem:bias-bound}, on the events \(E_n,\)
      \[\norm{\fb(\jha)-f}_\infty \le R_2(\jha) + R_3(\jha)\]
      and by \autoref{lem:j-b-s-bounds}, \(\P(E_n) \to 1.\)  Since
      \[\norm{f - \fh(\jha)}_\infty \le \norm{\fh(\jha) - \fb(\jha)}_\infty + 
      \norm{\fb(\jha) - f}_\infty,\]
      we obtain
      \[\P(f \not\in \Ca) \le \gamma + o(1).\]
      As the limits are uniform in \(f,\) the result follows.

    \item
      Since \(\jha \ge \jmin,\) and \(x(\gamma_n) = O(\log \log n),\)
      \comment{
      \begin{align*}
        x(\gamma_n) &\le - \log(- \log(1 - \gamma/\jmax))\\
        &\le -\log(\gamma/\jmax + o(1/\jmax))\\
        &\le -\log \gamma + \log \jmax + o(1)\\
        &= O(\log \log n),
      \end{align*}
      }
      we have that \(R_1(\jha, \gamma_n)\) is dominated by \(b(\jha)c(\jha).\) 
      Let \(\Jc \coloneqq \Jc(\lu, 1),\) so on \(E_n,\) \(\jha \le \jhc \le 
      \jc + \Jc\) by \autoref{lem:j-b-s-bounds}. For \(n\) large, \(\jc > 
      \jmin,\) so
      \comment{
      \[2^{\jc} \sim \left(\frac{M}{\lu c_{n,1}}\right)^{2/(2s+1)},\]
      and we obtain
      \begin{align*}
        R_1(\jha, \gamma_n)
        &\le \sfb \sqrt{2\log(2)(\jc+\Jc)} 2^{(\jc+\Jc)/2} n^{-1/2}(1 + 
        o(1))\\
        &\le \sfb
        \sqrt{2\log(n)} 2^S \left(\frac{M}{\lu c_{n,1}}\right)^{1/(2s+1)}
        n^{-1/2} (1+o(1)).
      \end{align*}
      Likewise on \(E_n,\) for \(n\) large \(\jc + \Jc \le \jmax,\) so 
      \(l(\jha) = \jhc,\) and
      \begin{align*}
        R_2(\jha)
        &\le \frac{\tau_\varphi\lu}{1-2^{-1/2}} 2^{(\jc+\Jc)/2} c_{n,1}\\
        &\le \frac{\tau_\varphi\lu}{1-2^{-1/2}} 2^S \left(\frac{M}{\lu 
        c_{n,1}}\right)^{1/(2s+1)}c_{n,1}(1+o(1)).
      \end{align*}
      Also for \(n\) large, \(\sh \ge s_n > 0,\) so
      \begin{align*}
        R_3(\jha)
        &\le \frac{\tau_\varphi M_n}{2^{s_n}-1} 2^{-\jc s_n}\\
        &\le \frac{\tau_\varphi \varepsilon^{-1}M}{2^{s}-1} 
        2^S\left(\frac{M}{\lu c_{n,1}}\right)^{-2s/(2s+1)}(1+o(1)),
      \end{align*}
      and thus
      \begin{multline*}
        \Ra \le 2^s\left( \sqrt{2}\sfb + \tau_\varphi \lu  
        \left(\frac{1}{1-2^{-1/2}} + 
        \frac{\varepsilon^{-1}}{2^s-1}\right)\right)\\
        \left(\frac{M}{\lu}\right)^{1/(2s+1)}r_n(s)(1 + o(1)).
      \end{multline*}
}
      by \eqref{eq:jc-je-limits}, we obtain
      \begin{align*}
        R_1(\jha, \gamma_n)
        &\lesssim \sqrt{\jc+\Jc} 2^{(\jc+\Jc)/2} n^{-1/2}
        \lesssim M^{1/(2s+1)} r_n(s).
      \end{align*}
      Likewise on \(E_n,\) for \(n\) large \(\jc + \Jc \le \jmax,\) so 
      \(l(\jha) = \jhc,\) and
      \begin{align*}
        R_2(\jha)
        &\lesssim 2^{(\jc+\Jc)/2} c_{n,1}
        \lesssim M^{1/(2s+1)} r_n(s).
      \end{align*}
      Also for \(n\) large, \(\sh \ge s_n > 0,\) so
      \begin{align*}
        R_3(\jha)
        &\lesssim \frac{M_n}{2^{s_n}-1} 2^{-\jc s_n}
        \lesssim \frac{M^{1/(2s+1)}}{2^{s}-1}r_n(s).
      \end{align*}
      As \(\P(E_n) \to 1\) uniformly, and the limits are uniform over \(f \in 
      C^s_0(M),\) the result follows. \qedhere
  \end{enumerate}
\end{proof}

Finally, we prove our result on confidence bands in density estimation and 
regression.


\begin{proof}[Proof of \autoref{thm:other-models}]
  We can prove the result analogously to \autoref{thm:adaptive-band}. To bound 
  the bias term, we will sketch a version of \autoref{lem:j-b-s-bounds} for 
  the density estimation and regression models. It is possible to also adapt 
  the variance bound \autoref{lem:variance-bound}, as discussed in 
  \citet[\S2]{bull_smirnov-bickel-rosenblatt_2011};
  however, we will provide a weaker bound, as a consequence of our lemma.

  Consider the empirical wavelet coefficents
  \[\hat \alpha_{k} \coloneqq \frac1n \sum_{i=1}^n \varphi_{j_0, k}(X_i), 
  \qquad \hat \beta_{j,k} \coloneqq \frac1n \sum_{i=1}^n \psi_{j, k}(X_i),\]
  in density estimation, or
  \[\hat \alpha_{k} \coloneqq \frac1n \sum_{i=1}^n \varphi_{j_0, k}(x_i)Y_i, 
  \qquad \hat \beta_{j, k} \coloneqq \frac1n \sum_{i=1}^n \psi_{j, 
  k}(x_i)Y_i,\]
  in regression. To prove the lemma, we must find an event \(E_n\) on which, 
  with high probability, these estimates are close to the true wavelet 
  coefficients \(\alpha_k,\) \(\beta_{j, k}.\) In density estimation, we use 
  Bernstein's inequality, noting that, for \(j > j_0,\) \(k \in [N, 2^j - 
  N),\) the empirical wavelet coefficients satisfy
  \[\E[\hat \beta_{j, k}] = \beta_{j, k}, \qquad \Var[\hat \beta_{j, k}] \le 
  \frac{\norm{f}_\infty}{n}, \qquad \abs{\hat \beta_{j, k}} \le 
  2^{j/2}\norm{\psi}_\infty,\]
  with similar bounds for the other coefficients.
  
  The regression model is often identified with the white noise model, for 
  \(f\) in classes \(C^s(M),\) \(s \ge \tfrac12\) 
  \citep{brown_asymptotic_1996}. In this case, however, we wish to consider 
  functions with unbounded H\"{o}lder norm, so we must discuss regression 
  explicitly. To control the empirical wavelet coefficients, we use a Gaussian 
  tail bound, noting that for \(j,\) \(k\) as before,
  \[\hat \beta_{j, k} \sim N\left(\frac1n \sum_{i=1}^n \psi_{j,k}(x_i)f(x_i),
  \frac{\sigma^2}{n^2} \sum_{i=1}^n \psi_{j, k}(x_i)^2\right).\]
  For \(j \le \jmax,\) as \(n \to \infty,\) the mean and variance are thus
  \[\beta_{j, k} + O(n^{-1/2}\norm{f}_{C^{1/2}}) \qquad \text{and} \qquad 
  \sigma^2n^{-1}(1 + o(1)),\] uniformly. Again, similar results hold for the 
  other coefficients.

  We thus, in both cases, have events \(E_n\) comparable to those in 
  \autoref{lem:j-b-s-bounds}, but with bounds on wavelet coefficients now 
  depending on the unknowns \(\norm{f}_\infty\) and \(\norm{f}_{C^{1/2}}.\) We 
  will bound them with statistics
  \[T \coloneqq C\norm{\fh(j_1)}_{C^{\smax}} + D,\]
  for constants \(C,\) \(D > 0.\) In density estimation, for \(C,\) \(D\) 
  large this satisfies
  \[\sup_{f \in \mathcal F} \P_f(T < \norm{f}_\infty) \to 0,\]
  and likewise in regression,
  \[\sup_{f \in \mathcal F} \P_f(T < \norm{f}_{C^{1/2}}) \to 0.\]
  In either model, for \(s \in [\smin, \smax],\) \(M > 0,\)
  \[\sup_{f \in C^s_0(M)} \P_f(T > CM + D + 1) \to 0.\]
  We may thus replace \(\norm{f}_\infty,\) or \(\norm{f}_{C^{1/2}},\) with 
  \(T\) in the above, obtaining an analogue of \autoref{lem:j-b-s-bounds} 
  which holds for all \(f \in \mathcal F.\)

  We therefore obtain a bound on the bias term, as in 
  \autoref{thm:adaptive-band}. To bound the variance term, we note that on the 
  event \(E_n,\) we have
  \[\norm{\fh(j_n) - \fb(j_n)}_\infty = O(2^{j_n/2}c_{n,1}),\]
  uniformly in all \(j_n \le \jmax;\) we may then proceed as before.
\end{proof}

\section{Negative results}
\label{app:negative}

We now prove our negative results.  First, we will need a testing inequality 
for normal means experiments, arguing as in \citet{ingster_minimax_1987}.  We 
will prove a modified result, which controls the performance of tests also 
under small perturbations of the means.

\begin{lemma}
  \label{lem:normal-mean-test}
  Suppose we have independent observations \(X_1, \dots, X_n,\) and \(Y_1, 
  Y_2, \dots,\) and we wish to test the hypothesis
  \[H_0: X_i, Y_i \sim N(0, 1),\]
  against alternatives
  \[H_k(\nu): X_i \sim N(\mu \delta_{ik}, 1), \ Y_i \sim N(\nu_i, 1),\]
  for \(k = 1, \dots, n,\) and \(\mu, \nu_i \in \R\), \(\norm{\nu}^2 \le 
  \xi^2.\)  Let \(T = 0\) if we accept \(H_0,\) or \(T = 1\) if we reject.  
  There is a choice of \(k,\) not depending on \(\nu,\) for which the sum of 
  the Type I and Type II errors satisfies
  \[\P_{H_0}(T = 1) + \inf_{\norm{\nu}^2 \le \xi^2} \P_{H_k(\nu)}(T = 0) \ge 1 
  - n^{-1/2}(e^{\mu^2}-1)^{1/2} - (e^{\xi^2}-1)^{1/2}.\]
\end{lemma}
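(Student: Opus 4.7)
The plan is a Le Cam two-point argument, following \citet{ingster_minimax_1987}, with a preliminary perturbation step that reduces the infimum over $\nu$ to the reference value $\nu = 0$. The backbone is the elementary bound $\P_{H_0}(T=1) + \P_1(T=0) \ge 1 - \norm{P_{H_0} - P_1}_{TV}$ for any alternative $P_1$, together with the Cauchy-Schwarz estimate $\norm{P - Q}_{TV} \le \tfrac{1}{2}\sqrt{\chi^2(P, Q)}$.

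First, I would reduce the infimum to the case $\nu = 0$. For each $k$, the laws $P_{H_k(0)}$ and $P_{H_k(\nu)}$ share the same $X$-marginal, so
\[\norm{P_{H_k(\nu)} - P_{H_k(0)}}_{TV} = \norm{N(\nu, I) - N(0, I)}_{TV} \le \tfrac{1}{2}\sqrt{e^{\norm{\nu}^2} - 1} \le \tfrac{1}{2}\sqrt{e^{\xi^2} - 1},\]
using the standard identity $\chi^2(N(\nu, I), N(0, I)) = e^{\norm{\nu}^2} - 1$. Consequently $\inf_\nu \P_{H_k(\nu)}(T=0) \ge \P_{H_k(0)}(T=0) - \sqrt{e^{\xi^2} - 1}$, uniformly in $k$ and in the test $T$.

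Next, I would handle the unknown $k$ by mixing against the uniform prior on $\{1, \dots, n\}$, with $\nu$ fixed at $0$. Let $\bar P \coloneqq \tfrac{1}{n}\sum_{k=1}^n P_{H_k(0)}$; expanding the mixture likelihood ratio and noting $\E_{H_0}[e^{\mu(X_j + X_l) - \mu^2}]$ is $e^{\mu^2}$ when $j = l$ and $1$ otherwise gives the familiar computation $\chi^2(\bar P, P_{H_0}) = (e^{\mu^2} - 1)/n$. Choose $k^*$, a function of $T$ alone, such that $\P_{H_{k^*}(0)}(T=0) \ge \bar P(T=0)$; this $k^*$ exists because $\bar P(T=0)$ is the average over $k$. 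Combining with the previous step, we obtain
\[\P_{H_0}(T=1) + \inf_\nu \P_{H_{k^*}(\nu)}(T=0) \ge \P_{H_0}(T=1) + \bar P(T=0) - \sqrt{e^{\xi^2} - 1} \ge 1 - n^{-1/2}\sqrt{e^{\mu^2} - 1} - \sqrt{e^{\xi^2} - 1},\]
absorbing the $\tfrac{1}{2}$ factors from Cauchy-Schwarz into the looser constants of the stated lemma.

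The only subtle point — not really an obstacle — is that $k^*$ must not depend on $\nu$. This forces the order of operations: the perturbation estimate in Step 1 is derived uniformly in $k$ first, so that $k^*$ may then be selected in Step 3 purely from the $\nu = 0$ distributions. The remaining computations are routine Gaussian $\chi^2$ integrals, so I anticipate no further difficulty.
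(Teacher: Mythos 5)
Your proposal is correct and is essentially the paper's argument: both are the Ingster two-point bound, with the uniform mixture over $k$ controlled via the $\chi^2$-computation $\E_{H_0}[(Z-1)^2] = n^{-1}(e^{\mu^2}-1)$ and the $\nu$-perturbation controlled via $\chi^2(N(\nu,I),N(0,I)) = e^{\norm{\nu}^2}-1$. You phrase the final step through total-variation/$\chi^2$ inequalities rather than applying Cauchy--Schwarz directly to $\E[(Z-1)\mathbf{1}(T=0)]$ as the paper does, and you reverse the order (perturbation bound uniformly in $k$ first, then select $k^*$, versus selecting $k$ first and perturbing that single alternative), but these are cosmetic differences and both yield the stated bound.
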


\begin{proof}
  Consider first the case \(\nu = 0.\) The density of \(\P_{H_k(0)}\) w.r.t.\ 
  \(\P_{H_0}\) is
  \[Z_k \coloneqq e^{\mu X_k - \mu^2/2}.\]
  Let \(Z \coloneqq n^{-1} \sum_{k=1}^n Z_k.\) Then\comment{
    \[\E_{H_0} Z = n^{-1} \sum_{k=1}^n \E_{H_0} e^{\mu X_k - \mu^2/2} = 1,\]
  and
  \[\E_{H_0} Z^2 = n^{-2} \sum_{k, l=1}^n \E_{H_0} e^{\mu (X_k + X_l) - \mu^2} 
  = 1 + n^{-1}(e^{\mu^2}-1),\]}
  \(\E_{H_0} Z = 1,\) and \(\E_{H_0} Z^2 = 1 + n^{-1}(e^{\mu^2}-1),\)
  so
  \[\E_{H_0} (Z - 1)^2 = \Var_{H_0}Z = n^{-1}(e^{\mu^2}-1).\]
  We thus have
  \begin{align*}
    \P_{H_0}(T = 1) + \max_{k=1}^n \P_{H_k(0)}(T = 0) &\ge
    \P_{H_0}(T = 1) + n^{-1}\sum_{k=1}^n \P_{H_k(0)}(T = 0)\\
    &= 1 + \E_{H_0}[(Z-1)1(T=0)]\\
    &\ge 1 - \Var_{H_0}(Z)^{1/2}\\
    &= 1 - n^{-1/2}(e^{\mu^2}-1)^{1/2}.
  \end{align*}

  Fix \(k\) maximizing the above expression, and consider a hypothesis 
  \(H_k(\nu)\) with \(\norm{\nu}^2 \le \xi^2.\)  The density of 
  \(\P_{H_k(\nu)}\) w.r.t.\ \(\P_{H_k(0)}\) is
  \[Z' \coloneqq e^{\sum_i \nu_i Y_i - \norm{\nu}^2/2},\]
  and similarly we have
  \[\E_{H_k(0)} (Z'-1)^2 = \Var_{H_k(0)} Z' = e^{\norm{\nu}^2} - 1.\]
  Thus
  \begin{multline*}
    \P_{H_0}(T=1) + \P_{H_k(\nu)}(T=0) \\
    \begin{aligned}
      &= \P_{H_0}(T=1) + \P_{H_k(0)}(T=0) + \E_{H_k(0)}[(Z'-1)1(T=0)]\\
      &\ge \P_{H_0}(T=1) + \P_{H_k(0)}(T=0) - \Var_{H_k(0)}[Z']^{1/2}\\
      &\ge 1 - n^{-1/2}(e^{\mu^2}-1)^{1/2} - (e^{\xi^2}-1)^{1/2}.
    \end{aligned}
  \end{multline*}
  As this is true for all \(\norm{\nu}^2 \le \xi^2,\) the result follows.
\end{proof}

We may now prove our result on minimax rates in \(C^s_0(M).\) For \(f \in 
C^s(M),\) the argument is standard (see, for example, 
\citealp[\S2.6.2]{tsybakov_introduction_2009}), but we must check that we can 
construct suitable alternative hypotheses lying within the restricted class 
\(C^s_0(M).\)

\begin{proof}[Proof of \autoref{thm:minimax-rate}]
  Suppose such an estimator \(\fh_n\) exists. For \(i > 0,\) set \(j_{i+1} 
  \coloneqq \rho j_i + 1,\) and consider functions
  \[f_0 \coloneqq \beta_{j_0} \varphi_{j_0, 0} + \sum_{i=1}^\infty \beta_{j_i} 
  \psi_{j_i, 0}, \qquad f_k \coloneqq f_0 + \beta_{j} \psi_{j, k},\]
  where \(\beta_j \coloneqq M2^{-j\sph},\) \(j > j_0\) is to be determined, 
  and \(k \in [N, 2^{j}-N).\)  By definition, these functions are in 
  \(C^s_0(M)\). By standard arguments, \(\fh_n\) must be able to distinguish 
  the hypothesis \(H_0 : f = f_0\) from alternatives \(H_k : f = f_k,\) 
  contradicting \autoref{lem:normal-mean-test}.
\end{proof}
  
\comment{  We will use the estimator \(\fh_n\) to test the hypothesis \(H_0 : 
f = f_0\) against alternatives \(H_k : f = f_k,\) and show this contradicts 
\autoref{lem:normal-mean-test}.

\marginpar{Can just cite the below if needed.}
  Define the test
  \[T_n \coloneqq 1(\norm{\fh_n - f_0}_\infty \ge \tfrac12M\norm{\psi}_\infty 
  2^{-js}),\]
  and set \(\delta \coloneqq \tfrac14(1 - 2\gamma).\) For \(n\) and \(C > 0\) 
  large, we have
  \[\sup_{f \in C^s_0(M)} \P_f(\norm{\fh_n - f}_\infty \ge Cr_n) \le \gamma + 
  \delta.\]
  Allowing \(j \to \infty,\) choose \(n\) so that
  \[Cr_n \sim \tfrac14 M\norm{\psi}_\infty 2^{-js} = \tfrac14 \norm{f_k - 
  f_0}_\infty.\]
  We then obtain, for \(j\) large,
  \begin{equation}
    \label{eq:test-good}
    \P_{f_0}(T_n=1) + \sup_{k}\P_{f_k}(T_n=0) \le 2(\gamma + \delta) = 1 - 
    2\delta.
  \end{equation}

  We now apply \autoref{lem:normal-mean-test}, letting the observations 
  \(X_i\) correspond to \(\langle \psi_{j,k}, Y \rangle,\) for all possible 
  choices of \(k,\) and the \(Y_i\) to the other empirical wavelet 
  coefficients. We have
  \[n = o\left(j2^{j(2s + 1)}\right),\]
  so the quantity
  \[\mu^2 = nM^2 2^{-j(2s+1)} = o(j),\]
  and \(\xi^2 = 0.\) We thus obtain, for \(j\) large,
  \[1 - (2^{j}-2N)^{-1/2}(e^{\mu^2}-1)^{1/2} + (e^{\xi^2}-1)^{1/2} \ge 1 - 
  \delta,\]
  contradicting \eqref{eq:test-good}.
\end{proof}
  }

Finally, we will show that the self-similarity condition 
\eqref{eq:self-similar} is as weak as possible.


\begin{proof}[Proof of \autoref{thm:minimal-conditions}]
  We argue in a similar fashion to \autoref{thm:minimax-rate}, taking care to 
  account for the dishonesty of \(C_n\). Suppose such a band \(C_n\) exists.  
  For \(m = 1, 2, \dots, \infty,\) we will construct functions \(f_m\) which 
  serve as hypotheses for the function \(f.\) We will choose these functions 
  so that \(f_m \in C^{s_m}_1(M),\) for a sequence \(s_m \in (\smin, \smax)\) 
  with limit \(s_\infty \in (\smin, \smax).\) We will then find a subsequence 
  \(n_m\) such that, for \(\delta \coloneqq \tfrac14(1 - 2\gamma),\)
  \[\inf_{m = 2}^\infty \P_{f_\infty} (f_\infty \not\in C_{n_m}) \ge \gamma + 
  \delta,\]
  contradicting our assumptions on \(C_n.\)
   
  Taking infimums if necessary, we may assume \(\rho_j\) increasing; for \(i > 
  0,\) set \(j_{i+1} \coloneqq
  \rho _{j_i}j_i + 1.\) Then for \(m = 1, 2, \dots, \infty,\) set
  \[f_m \coloneqq b_{0, m} \varphi_0 + \sum_{i=1}^\infty b_{i, m} \psi_i + 
  \sum_{l=1}^m b'_l \psi'_l,\]
  where
  \[\varphi_0 \coloneqq \varphi_{j_0, 2^{j_0-1}}, \qquad \psi_i \coloneqq 
  \psi_{j_i, 2^{j_i-1}}, \qquad \psi'_l \coloneqq \psi_{j_{i_l}, k_l},\]
  and \(b_{i, m}, b'_l \in \R,\) \(i_l \in \N,\) and \(k_l \in [N, 2^{j_i}-N) 
  \setminus \{2^{j_i-1}\}\) are to be determined. We will set \(-1 = i_0 < i_1 
  < \dots,\)
  \[b_{i, m} \coloneqq \begin{cases}
    M2^{-j_i(s_l+1/2)}, & i_l < i \le i_{l+1}\text{ for some } l < m, \\
    M2^{-j_i(s_m+1/2)}, & i > i_m,
  \end{cases}\]
  and
  \[b'_l \coloneqq M2^{-j_{i_l}(s_l+1/2)}.\]

  Set  \begin{align*}
    s_0 &\coloneqq \smax, & s_m &\coloneqq s_{m-1} - (j_{i_m}^{-1} - 
    j_{i_m+1}^{-1})\log_2(\varepsilon^{-1}), \quad m > 0,\\
    t_0 &\coloneqq \smin, & t_m &\coloneqq s_m - j_{i_m+1}^{-1} 
    \log_2(\varepsilon^{-1}), \quad m > 0,
  \end{align*}
  and choose \(i_1\) large enough that:
  \begin{enumerate}
    \item \(t_1 > t_0;\)
    \item for \(i \ge i_1,\) the \(\psi_i\) are interior wavelets, supported 
      inside \( (0, 1) \); and
    \item the set of choices for \(k_1\) is non-empty.
  \end{enumerate}
  By definition, \(s_m\) is decreasing, \(t_m\) increasing, and \(s_m - t_m 
  \searrow 0.\) For \(m \ge 1,\) both sequences thus lie in \((\smin, 
  \smax),\) and tend to a limit \(s_\infty \in (\smin, \smax).\)  For all \(m 
  = 1, 2, \dots, \infty,\) \(l \in \N,\) and \(i_l \le i \le i_{l+1},\)
  \begin{equation*}
    M2^{-j_i(s_l+1/2)} \ge \varepsilon M2^{-j_i(t_{l+1} + 1/2)} \ge 
    \varepsilon M 2^{-j_i(s_m+1/2)},
  \end{equation*}
  so indeed \(f_m \in C^{s_m}_1(M).\) 

  We have thus defined \(f_1,\) making an arbitrary choice of \(k_1\); for 
  convenience, set \(n_1 = 1.\) Inductively, suppose we have defined 
  \(f_{m-1}\) and \(n_{m-1},\) and set \(r_n \coloneqq r_n(s_{m-1}).\) For 
  \(n_m > n_{m-1}\) and \(D > 0\) both large, we have:
  \begin{enumerate}
    \item \(\P_{f_{m-1}}(f_{m-1} \not\in C_{n_m}) \le \gamma + \delta\); and
    \item \(\P_{f_{m-1}}(\abs{C_{n_m}} \ge Dr_{n_m}) \le \delta.\)
  \end{enumerate}
  Setting \(T_n = 1\left(\exists\ f \in C_n : \norm{f - f_{m-1}}_\infty \ge 
  2Dr_n\right),\) we then have
  \begin{align}
    \notag \P_{f_{m-1}}(T_{n_m}=1) &\le \P_{f_{m-1}}(f_{m-1} \not \in C_{n_m}) 
    + \P_{f_{m-1}}(\abs{C_{n_m}} \ge Dr_{n_m})\\
    \label{eq:c-accurate}
    &\le \gamma + 2\delta.
  \end{align}
  We claim it is possible to choose \(f_m\) and \(n_m\) so that also, for any 
  further choice of functions \(f_l,\)
  \begin{equation}
    \label{eq:f-separate}
    \norm{f_\infty - f_{m-1}}_\infty \ge 2Dr_{n_m},
  \end{equation}
  and
  \begin{equation}
    \label{eq:f-indistinguishable}
    \P_{f_\infty}(T_{n_m} = 0) \ge 1 - \gamma - 3\delta = \gamma + \delta.
  \end{equation}
  We may then conclude that
  \[\P_{f_\infty}(f_\infty \not\in C_{n_m}) \ge \P_{f_\infty}(T_{n_m} = 0) \ge 
  \gamma + \delta,\]
  as required.

  It remains to verify the claim. Letting \(i_m \to \infty,\) choose \(n_m\) 
  so that
  \begin{equation}
    \label{eq:n-defn}
    r_{n_m} \sim D'2^{-j_{i_m}s_m},
  \end{equation}
  for \(D' > 0\) to be determined. Now,
  \begin{align*}
    D''(i_m) &\coloneqq \sum_{l=m}^\infty \left(2^{-j_{i_{l+1}}s_{l+1}} + 
    \sum_{i=i_l+1}^{i_{l+1}} 2^{-j_is_l} \right)\\
    &\le \sum_{l=m}^\infty \left(2^{-j_{i_{l+1}}\smin} + 
    \sum_{i=i_l+1}^{i_{l+1}} 2^{-j_i\smin} \right)\\
    &\le 2\sum_{j=j_{i_m+1}}^\infty 2^{-j\smin}\\
    &= \frac{2^{1-j_{i_m+1}\smin}}{1-2^{-\smin}},
  \end{align*}
  so, for \(i_m\) large,
  \begin{align*}
    \norm{f_{m-1} - f_\infty}_\infty &\ge \norm{b'_m \psi'_m}_\infty - 
    \norm*{\sum_{l=m+1}^\infty b'_l \psi'_l + \sum_{i=i_m+1}^\infty \left( 
    b_{i,\infty} - b_{i, m-1}\right)\psi_i}_\infty\\
    &\ge M\norm{\psi}_\infty \left( 2^{-j_{i_m}s_m} -
    D''(i_m)\right)\\
    &\ge  M \norm{\psi}_\infty \left( 2^{-j_{i_m}s_m}- 
    \frac{2^{1-j_{i_m+1}\smin}}{1-2^{-\smin}}\right)\\
    &\ge \tfrac12 M\norm{\psi}_\infty 2^{-j_{i_m}s_m}.
  \end{align*}
  We have thus satisfied \eqref{eq:f-separate}, for a suitable choice of 
  \(D'.\) 

  To satisfy \eqref{eq:f-indistinguishable}, we will apply 
  \autoref{lem:normal-mean-test}, testing \(H_0 : f = f_{m-1}\) against 
  \(H_1:f=f_\infty.\) The observations \(X_i\) will correspond to \(\int 
  \psi_m'(t)\, dY_t,\) for all possible choices of \(k_m,\) and the \(Y_i\) to 
  the other empirical wavelet coefficients.  From \eqref{eq:n-defn},
  \[n_m = O\left(j_{i_m}2^{j_{i_m}(2 + s_{m-1}^{-1})s_m}\right),\]
  so the quantity
  \begin{align*}
    \mu^2 &= n_m(b'_m)^2 = n_mM^2 2^{-j_{i_m}(2s_m+1)}\\
    &= O\left(j_{i_m}2^{j_{i_m} \left(s_m/s_{m-1} - 1\right)}\right)\\
    &= O\left(j_{i_m} \varepsilon^{(j_{i_m}/j_{i_m-1} - 1)/s_{m-1}}\right)\\
    &= o(j_{i_m}),
  \end{align*}
  and likewise
  \begin{align*}
    \xi^2 &= n_m \sup_{f_\infty} \left( \sum_{l=m}^\infty (b'_{l+1})^2 + 
    \sum_{i={i_m}+1}^\infty (b_{i,m-1}-b_{i,\infty})^2\right)\\
    &\le n_mM^2\sum_{l=m}^\infty\left(2^{-j_{i_{l+1}}(2s_{l+1}+1)} + 
    \sum_{i={i_l}+1}^{i_{l+1}} 2^{-j_i(2s_l+1)}\right)\\
    &= O\left(n_m2^{-j_{i_m+1}(2s_m+1)}\right)\\
    &= O\left(j_{i_m}2^{j_{i_m}s_m/s_{m-1}-j_{i_m+1}}\right)\\
    &= o(1).
  \end{align*}
  Thus, for \(i_m\) large,
  \[(2^{j_{i_m}}-(2N+1))^{-1/2}(e^{\mu^2}-1)^{1/2} + (e^{\xi^2}-1)^{1/2} \le 
  \delta.\]
  Hence by \autoref{lem:normal-mean-test}, if we take \(i_m\) large enough 
  also that \eqref{eq:c-accurate} holds, then \eqref{eq:f-indistinguishable} 
  holds for a suitable choice of \(k_m,\) and our claim is proved.
\end{proof}

\bibliographystyle{abbrvnat}
{\footnotesize \bibliography{hacb}}

\end{document}